\newtheorem{remark}{Remark}
\newcommand{\D}{{\, \rm d}}
\newcommand{\DD}{{\, \rm D}}
\newcommand{\Div}{\mathrm{div}\;}
\newcommand{\R}{\mathbb{R}}
\newcommand{\bJ}{\mathbf{J}}
\newcommand{\bK}{\mathbf{K}}
\newcommand{\bI}{\mathbf{I}}
\newcommand{\bH}{\mathbf{H}}
\newcommand{\bL}{\mathbf{L}}
\newcommand{\bD}{\mathbf{D}}
\newcommand{\bn}{\mathbf{n}}
\newcommand{\dx}{\, {\mathrm d}}
\title{Analysis of a non-local and non-linear Fokker-Planck model for cell crawling migration}
\author{Christ\`ele Etchegaray\thanks{LMO, Univ. Paris-Sud, CNRS, Universit\'{e}
Paris-Saclay, 91405 Orsay, France.  \& 
MAP5, CNRS UMR 8145, Universit\'{e} Paris Descartes, 45 rue des Saints  P\`{e}res
75006 Paris,
France. ({\tt christele.etchegaray@math.u-psud.fr})}  \and Nicolas Meunier\thanks{MAP5, CNRS UMR 8145, Universit\'{e} Paris Descartes, 45 rue des Saints  P\`{e}res
75006 Paris,
France. ({\tt nicolas.meunier@parisdescartes.fr})} \and Raphael Voituriez.
        \thanks{Laboratoire de la mati\`ere condens\'ee, CNRS UMR 7600 \& Laboratoire Jean Perrin, UMR 8237, Universit\'e Pierre et Marie Curie, 4 Place Jussieu, 75255 Paris Cedex 05 France ({\tt voiturie@lptmc.jussieu.fr})}}
\begin{document}

\maketitle

\begin{abstract}
Cell movement has essential functions in development, immunity and cancer. Various cell migration patterns have been reported and a general rule has recently emerged, the so-called UCSP (Universal Coupling between cell Speed and cell Persistence), \cite{Maiuri}. This rule says that cell persistence, which quantifies the straightness of trajectories, is robustly coupled to migration speed. In \cite{Maiuri}, the advection of polarity cues by a dynamic actin cytoskeleton undergoing flows at the cellular scale was proposed as a first explanation of this universal coupling. Here, following ideas proposed in \cite{Maiuri}, we present and study a simple model to describe motility initiation in crawling cells. It consists of a non-linear and non-local Fokker-Planck equation, with a coupling involving the trace value on the boundary. In the one-dimensional case we characterize the following behaviours: solutions are global if the mass is below the critical mass, and they can blow-up in finite time above the critical mass. In addition, we prove a quantitative convergence result using relative entropy techniques. 
\end{abstract}

\begin{AMS}
35B60; 35B44; 35Q92; 92C17; 92B05.
\end{AMS}

\begin{keywords}
Cell polarisation, global existence, blow-up, asymptotic convergence, entropy method, Keller-Segel system.
\end{keywords}

\section{Introduction}

Cell migration is a fundamental biological process involved in morphogenesis, tumor spreading, and wound healing \cite{Ridley_SignalFrontBack, Keller_Development, Huber_EMT}. One of its most spectacular instance is cell crawling, which is crucial to immune cells in order to reach an inflammation spot, but is also observed e.g. in tumor cells during metastasis formation \cite{metastasis}. Identifying key mechanisms involved in cell migration is then a major issue both for our fundamental understanding and for clinical research.

We focus here on cells crawling on a flat substrate, without any external cue. This type of movement occurs in several steps: protrusion of the cell at the "leading edge" (led by actin polymerisation), adhesion to the substrate, contraction and translocation of the cell body. However, if the notion of "leading edge" is clear for cells undergoing persistent motion (as in the chemotaxis phenomenon), the relation between protrusive activity and directionnality in more general cases is still unresolved. Indeed, migrating cells might transiently polarize - a front and a back appear. If such ability is weak, the resulting motion is close to random. This polarity is reflected at the molecular level by a restriction of certain molecules to particular regions of the inner cell membrane. For example, the activated forms of Rac and Cdc42 molecules are found at the front of the cell, whereas RhoA molecules are found toward the rear (see e.g \cite{Machacek_Rho_2009}). It is known that this asymmetry results from strong feedbacks exerted between these signaling pathways and mechanical elements \cite{polarity_mechanochemical}. Self-polarisation is then a multiscale phenomenon for which modeling efforts can bring new insights \cite{motility_number}.

Integrated models has been succesfully developped for addressing the issue of motile cell shape, in particular for the keratocyte \cite{multiscale_2D_keratocyte}. Computational models allow for in silico experiments and direct comparison with experimental measures (see \cite{Mogilner_Nature_2008,redundant_mogilner}, and more generally \cite{computational_shape}). In particular, it has been highlighted in \cite{redundant_mogilner} that redundant minimal mechanisms exist to ensure a motile stable shape for the keratocyte.

Self-polarisation has led to computational models investigating the role of contraction, adhesion and actin polymerisation for keratocytes or epithelial cells, providing useful predictions and allowing confrontation with experimental data  \cite{recho,balance_mogilner,polarization_mogilner}. Other mechanochemical models were built to perform in silico experiments (\cite{integrated1D,integrated2D}), but they remain unable to provide a minimal framework to explain the persistence issue in cell motion. 

Recently, in \cite{Maiuri}, it was shown experimentally that cell persistence is correlated to the flow of actin filaments from the front to the back of the motile part of the cell, as protrusions with faster actin flows are more stable in time. It was also shown that faster actin flows generate steeper gradients of actin-binding proteins. 

In this work, we study a conceptually minimal model for polarity initiation and maintenance based on an active gel description of the actin cytoskeleton, and its interaction with molecular signaling pathways. 
More precisely, we use a rigid version of the model first proposed in \cite{Blanch} that we enrich, in the spirit of \cite{Maiuri},  with a feedback loop between actin polymerisation locations and polarity markers \cite{Machacek_Rho_2009,Hall1998}, that are advected by a dynamic actin cytoskeleton undergoing flows at the cellular scale. 
This minimal model allows us to obtain qualitative results on cell persistence. 

We analyse the long time asymptotics of the model. The principal goal of our analysis is to identify regimes in which non homogeneous stationary states, that will be interpreted as polarized states, emerge. 

The main ingredients of the model are as follows. We assume that there exists a mesoscopic length scale, small compared to the cell but large compared to individual molecules at which the properties of the cytoskeleton and of the solvent, which constitutes the cell, can be described by continuous fields \cite{Joanny_Prost_2009}. Following \cite{Jul_Kru_Prost_Joanny, Jul_Kru_Prost_Joanny_2, Blanch}, we describe the actin cytoskeleton as a 2D Darcy flow, bounded by a membrane with a given shape. We assume that the relevant dynamics is sufficiently slow to neglect elastic effects. Actin is assumed to polymerize at the membrane and to depolymerize uniformly at a constant rate in the interior of the cell. Moreover, polymerisation is assumed to depend on the concentration of a marker that is advected by the actin flow itself. Finally, since we focus here on cell crawling on a flat substrate, the main external force arising is friction with the substrate. 

The markers, whose density is denoted $c(t, x)$, are assumed to diffuse in the cytoplasm and to be actively transported along the cytoskeleton. Consider a viscous active fluid with pressure $p$ filling a two-dimensional  bounded domain of fixed shape figuring the cell to describe the cytoskeleton. When all coefficients are set to 1, and after a global change of variable so that the domain of definition $\Omega$ is fixed, the resulting motion is a biased diffusion equation with advection field in the cell frame $u(t, x)=-\nabla p(t,x)$:
\begin{equation}\label{eq:modele_CD}
\partial _t c (t,x) = \Div \Big( \nabla p (t,x)  c(t,x) +\nabla c(t,x) \Big) \, ,\   t>0\, , \ x \in \Omega\, ,
\end{equation} 
together with zero-flux boundary condition on $\partial\Omega$ in order to conserve the molecular content. The active character of the cytoskeleton is contained in the pressure problem. Actin polymerization at the boundary leads to a Dirichlet condition depending on $c$, while depolymerization arises as a sink term inside the domain. More precisely, the pressure is assumed to satisfy, for all $t>0$:
\begin{equation}\label{eq:pression}
\begin{cases}
-\Delta p= -1\, , &\textrm{ on }\Omega\, ,\\
p=1- c \, ,  &\textrm{ on } \partial \Omega\, .\\
\end{cases}
\end{equation} 
Furthermore, considering a global friction coefficient set to $1$, the cell velocity $v$, which arises from the inside flow rubbing on the substrate, is given by
\begin{equation}\label{eq:cell_vel}
v(t)=- \int_{\partial \Omega} c(t,y) \bn \D \sigma\, , \quad t>0\, , 
\end{equation}
where $\bn$ denotes the outward unit normal and $\D \sigma$ the surface measure on $\partial \Omega$. 


In the one-dimensional case, assuming that $\Omega=(-1,1)$ the previously described model writes as a non-linear and non-local Fokker-Planck equation:
\begin{equation}\label{eq:modele_1D}
\partial _t c(t,x) =  \partial _{xx} c(t,x)+\partial_x\Big( \big(x + c(t,-1)-c(t,1)\big) c(t,x)\Big) \, ,\quad   t>0\, , \ x\in \Omega\, ,
\end{equation}
together with zero-flux boundary conditions at $x=-1$ and $x=1$:
\begin{equation}\label{eq:CL_modele_1D}
\begin{cases}
\big(c(t,-1)-c(t,1) -1\big) c(t,-1) + \partial _x c(t,-1) =0 \, , \\
\big(c(t,-1)-c(t,1)+1\big) c(t,1) + \partial _x c(t,1) =0  \, .
\end{cases}
\end{equation}
The main aim of this paper is to provide some results on the long time asymptotics of the solution to \eqref{eq:modele_1D} - \eqref{eq:CL_modele_1D} and to give estimates on the convergence rates in cases of convergence. To this end, we first look for stationary states. Let us denote $M = \int_{\Omega}c(t,x) \dx x$ the (conserved) mass of molecules inside the cell. In the case $M \le 1$, there exists a unique stationary state $G_M (x):=M exp(-x^2/2)/\int_{-1}^1exp(-y^2/2 )\D y $ towards which the solution converges and the asymptotic result is obtained through the convergence to zero of a suitable Lyapunov functional $\bL$ defined in \eqref{eq:lyapunov} in the case $M<1$, and through the convergence to zero of the relative entropy $\bH$ defined in \eqref{eq:relative_entropy} in the case $M=1$. Moreover, in both cases, using the logarithmic Sobolev inequality with a suitable function, we obtain an exponential decay to equilibrium.
\begin{theorem} \label{th:1D} 
Assume that the initial datum $c_0$ satisfies both $c_0 \in L^1(-1,1)$ and $\int_{-1}^1 c_0(x) \log c_0(x)\D x<+\infty$. Assume in addition that $M\le 1$, then there exists a global weak solution of \eqref{eq:modele_1D} - \eqref{eq:CL_modele_1D} that satisfies the following estimates for all $T>0$,
\begin{eqnarray*}
\sup_{t\in (0,T)} \int_{-1}^1 c(t,x) \log c(t,x)\D x &<& +\infty\, ,\\
\int_0^T\int_{-1}^1 c(t,x) \left( \partial_x \log c(t,x) \right)^2 \D x \D t &<& +\infty\, . 
\end{eqnarray*}
Moreover, the solution strongly converges in $L^1$ towards the unique stationary state $G_M (x) $ and the rate is exponential.
\end{theorem}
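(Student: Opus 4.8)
\medskip
\noindent\emph{Proof strategy.} Write $J(t):=c(t,-1)-c(t,1)$ (which is also the cell velocity \eqref{eq:cell_vel}), $h:=\log(c/G_M)$, $m_1(t):=\int_{-1}^{1}xc(t,x)\,\D x$, and set
\begin{equation*}
\bH(t):=\int_{-1}^{1}c\log\frac{c}{G_M}\,\D x\ \ge\ 0,\qquad D(t):=\int_{-1}^{1}c\,(\partial_x h)^2\,\D x=\int_{-1}^{1}c\,(\partial_x\log c+x)^2\,\D x .
\end{equation*}
The boundary conditions \eqref{eq:CL_modele_1D} say exactly that the flux $-\partial_x c-(x+J)c$ vanishes at $x=\pm1$, and moreover that $\partial_x h(t,\pm1)=-J(t)$.

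\textbf{Step 1: two identities and the key nonlocal estimate.} Multiplying \eqref{eq:modele_1D} by $h$ and by $x$ and integrating by parts, the flux being zero on $\partial\Omega$, yields
\begin{equation*}
\dt\bH=-D+J^2-J\,m_1,\qquad \dt m_1=(1-M)\,J-m_1 .
\end{equation*}
The only anti-dissipative term is $+J^2$, and it is controlled by the dissipation itself: since $G_M(-1)=G_M(1)$, writing $J=G_M(1)\big((c/G_M)(t,-1)-(c/G_M)(t,1)\big)=-G_M(1)\int_{-1}^{1}\partial_x(c/G_M)\,\D x$, using $\partial_x(c/G_M)=(c/G_M)\,\partial_x h$, Cauchy--Schwarz and the crude bound $e^{x^2}\le e$ on $(-1,1)$ gives the clean inequality
\begin{equation*}
J^2\ \le\ M\,D ,
\end{equation*}
with strict improvement $J^2\le\theta\,D$, $\theta=\theta(M)<1$, whenever the mass of $c$ does not concentrate at the endpoints (hence near equilibrium, quantitatively under a bound on $\bH$).

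\textbf{Step 2: existence and the two estimates of the statement.} I would construct the solution by regularising the nonlocal coupling (replace the traces $c(t,\pm1)$ by mollified near-boundary averages, possibly adding a vanishing viscosity), getting for each parameter a smooth positive solution of mass $M$. From $\dt\bH=-D+J^2-Jm_1$, $J^2\le MD$, $|m_1|\le M\le1$ and Young's inequality one obtains $\dt\bH\le-\tfrac{1-M}{2}D+C_M$ when $M<1$ (the critical case $M=1$ requiring in addition the moment decay $m_1(t)=m_1(0)e^{-t}$ and the strict version $J^2\le\theta D$), whence $\sup_{[0,T]}\bH<\infty$ and $\int_0^TD\,\D t<\infty$; since $\int_{-1}^1c(\partial_x\log c)^2\,\D x\le 2D+2M$ and $\int_{-1}^1c\log c\,\D x=\bH-\int_{-1}^1\tfrac{x^2}{2}c\,\D x+(\log M-\log Z)M$ with $Z:=\int_{-1}^1 e^{-y^2/2}\D y$, these are exactly the two estimates claimed. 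To pass to the limit, $\sqrt{c}$ is bounded in $L^2(0,T;H^1)\cap L^\infty(0,T;L^2)$ and, from the equation, H\"older in time with values in a negative Sobolev space, so Aubin--Lions gives strong $L^2$ and a.e.\ convergence; together with the entropy bound (uniform integrability) this identifies \emph{all} terms in the limit, including the non-weakly-continuous trace $c(t,\pm1)$.

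\textbf{Step 3: exponential convergence.} For $M<1$ use the Lyapunov functional $\bL(t):=\bH(t)+\tfrac{1}{2(1-M)}m_1(t)^2$. Inserting $\dt m_1=(1-M)J-m_1$, the cross term $-Jm_1$ cancels and $\dt\bL=-D+J^2-\tfrac{1}{1-M}m_1^2\le-(1-M)D-\tfrac{1}{1-M}m_1^2$ by Step 1. Invoking the logarithmic Sobolev inequality for the log-concave weight $e^{-x^2/2}$ on the bounded interval $(-1,1)$, which in the present normalisation reads $D\ge 2\alpha\,\bH$ for some $\alpha>0$, gives $\dt\bL\le-\lambda\bL$ with $\lambda=\min\{2,\,2\alpha(1-M)\}$, hence $\bH(t)\le\bL(t)\le\bL(0)e^{-\lambda t}$, and the Csisz\'ar--Kullback--Pinsker inequality $\|c(t)-G_M\|_{L^1}^2\le 2M\,\bH(t)$ gives strong $L^1$ convergence at exponential rate. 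For $M=1$ the moment equation degenerates to $\dt m_1=-m_1$, so no correction is needed: one works with $\bH$, first showing $\bH(t)\to0$ (by compactness of the trajectory and uniqueness of the stationary state of mass $M\le1$, established before the statement), then using that near equilibrium $J^2\le\theta D$ with $\theta<1$ while $m_1$ is already exponentially small to get $\dt\bH\le-(1-\theta)\,2\alpha\,\bH+Ce^{-2t}$ --- the ``suitable function'' in the log-Sobolev step serving precisely to restore this strict coercivity --- and Gr\"onwall again yields exponential decay of $\bH$, hence of $\|c(t)-G_1\|_{L^1}$.

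\textbf{Main obstacle.} Everything hinges on taming the nonlocal boundary term $J=c(-1)-c(1)$, which enters the entropy balance anti-dissipatively; the inequality $J^2\le MD$ (and its refinement $J^2\le\theta D$) is the crux and is exactly where subcriticality enters --- for $M<1$ it leaves the margin $(1-M)D$ which, after the moment correction, closes the Lyapunov estimate, while for $M=1$ one must additionally exploit the autonomous exponential decay of the first moment and the strict bound $J^2\le\theta D$, the borderline case being genuinely the delicate one (mirroring the blow-up that occurs for $M>1$). A secondary technical point is that $c\mapsto c(\cdot,\pm1)$ is not weakly continuous, so the compactness in Step 2 must be strong enough to pass to the limit in the coupling.
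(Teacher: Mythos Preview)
Your treatment of the subcritical case $M<1$ is correct and close to the paper's: you use the same Lyapunov functional $\bL=\bH+\tfrac{1}{2(1-M)}m_1^2$, the same entropy/moment identities, and your trace estimate $J^2\le M D$ (with $D=\bI(c|G_M)$) is a legitimate variant of the paper's trace inequality. The one substantive difference is in the log-Sobolev step: you apply it with the \emph{fixed} reference $G_M$, which yields $\dt\bL\le -2(1-M)\bL$, whereas the paper applies it with the \emph{moving} reference $\Gamma_c(x)\propto e^{-Jx-x^2/2}$ and, after the decomposition $\bH(c|G_M)=\bH(c|\Gamma_c)+M\log(Z_0/Z_J)-Jm_1$ together with Jensen, obtains the sharper $\dt\bL+2\bL\le 0$. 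Both give the exponential decay the theorem asks for, but your rate degenerates as $M\uparrow 1$, which is precisely where your argument runs into trouble.

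For $M=1$ there is a genuine gap. Your scheme rests on the ``strict'' inequality $J^2\le\theta D$ with $\theta<1$, which you only claim \emph{near equilibrium}; yet you invoke it already in Step~2 to obtain the global a~priori bounds, and again in Step~3 to show $\bH\to 0$ before you are entitled to assume proximity to $G_1$. This is circular: with only $J^2\le D$ (which is all you have globally when $M=1$), the entropy balance gives $\dt\bH\le -Jm_1$, and the sign of the right-hand side is uncontrolled, so neither boundedness of $\bH$ nor integrability of $D$ follows. The paper avoids this entirely by exploiting the moving reference: since $\bI(c|\Gamma_c)\ge 2\bH(c|\Gamma_c)$ and the decomposition of $\bH(c|G_1)$ produces exactly the term $-Jm_1$ that cancels the bad contribution, one obtains $\dt\bH(c|G_1)\le -\bH(c|G_1)$ directly, with no smallness assumption. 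The separate a~priori control of $\int_0^T\!\!\int c(\partial_x\log c)^2$ at $M=1$ is then handled by a different device (a superlinear trace function $\Lambda$ with $\Lambda'(u)=(\log u)_+^{1/2}$), not by the entropy inequality alone. The missing idea in your proposal is thus the use of $\Gamma_c$ rather than $G_M$ in the log-Sobolev step; once you make that change, the critical case closes cleanly and the two-phase ``compactness then local coercivity'' detour becomes unnecessary.
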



Solutions of \eqref{eq:modele_1D} may become unbounded in finite time (so-called blow-up). This occurs if the mass M is above the critical mass: $M > 1$, and for an asymmetric enough initial profile, measured with the first momentum shifted in $x=-1$ of $c$, $\tilde{\bJ}(t) = \int_{-1}^1   (x+1) c(t,x)\D  x$.
\begin{theorem}\label{th:BU}
Assume $M>1$ and the initial first moment shifted of 1 is small: $\tilde{\bJ}(0) <  (M-1)/2$. Assume in addition that $c_0$ satisfies $c_0(-1)-c_0(1)>1$. Then the solution to \eqref{eq:modele_1D} - \eqref{eq:CL_modele_1D}  with initial data $c(0,x) = c_0(x)$ blows-up in finite time. 
 \end{theorem}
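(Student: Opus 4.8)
\medskip
\noindent\emph{Proof strategy.} The plan is to argue by contradiction: assuming that a sufficiently regular, nonnegative, global solution $c$ of \eqref{eq:modele_1D}--\eqref{eq:CL_modele_1D} exists, I would monitor the shifted first moment
\[
\tilde{\bJ}(t):=\int_{-1}^{1}(1+x)\,c(t,x)\D x=\bJ(t)+M,\qquad \bJ(t):=\int_{-1}^{1}x\,c(t,x)\D x ,
\]
which is nonnegative and is close to $0$ exactly when almost all the mass sits near $x=-1$. The goal is to show that the stated hypotheses force $\tilde{\bJ}$ to reach $0$ at a finite time, which is impossible for a genuine nonnegative solution; this confines the maximal existence time to some finite $T^{\ast}$, and the elementary lower bound $\|c(t,\cdot)\|_{L^{\infty}(-1,1)}\gtrsim M^{2}/\tilde{\bJ}(t)$ (a Chebyshev argument: a fixed fraction of the mass lies in an interval of length $O(\tilde{\bJ}(t)/M)$ around $x=-1$) then gives $\|c(t,\cdot)\|_{L^{\infty}}\to+\infty$ as $t\uparrow T^{\ast}$, i.e.\ blow-up.

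First I would put \eqref{eq:modele_1D} in flux form $\partial_{t}c=\partial_{x}F$ with $F=\partial_{x}c+(x+\delta(t))c$ and $\delta(t):=c(t,-1)-c(t,1)$, and note that \eqref{eq:CL_modele_1D} is precisely $F(t,\pm1)=0$. Integrating by parts and using $F(t,\pm1)=0$ together with mass conservation $\int_{-1}^{1}c\D x=M$ yields the exact identity
\[
\dt\,\tilde{\bJ}(t)=-\int_{-1}^{1}F\D x=\bigl(c(t,-1)-c(t,1)\bigr)-\bJ(t)-\delta(t)M=(1-M)\,\delta(t)-\tilde{\bJ}(t)+M .
\]
Since $M>1$ the coefficient of the nonlocal coupling $\delta(t)$ is negative, so a \emph{large positive} $\delta(t)$ makes $\tilde{\bJ}$ decay; the whole game is therefore to bound $\delta(t)$ from below along the flow.

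The key estimate is that smallness of $\tilde{\bJ}$ propagates into largeness of $\delta$, and I would split it into three sub-steps. (i) Propagate the sign of the drift: show $\delta(t)>1$ as long as the solution exists, so that the transport field $x+\delta(t)$ is positive on $(-1,1)$ and drives all the mass against the left endpoint; the region $\{\delta>1\}$ should be invariant, which requires tracking the boundary trace through $\partial_{x}c(t,-1)=(1-\delta(t))c(t,-1)$ (read off from \eqref{eq:CL_modele_1D}) and a maximum-principle / ODE-comparison argument for $t\mapsto\delta(t)$. (ii) Control the right trace: since the drift moves mass away from $x=1$, the trace $c(t,1)$ stays bounded by a constant $C_{0}$ depending only on the data (and is in fact small once $\tilde{\bJ}$ is small). (iii) Lower bound the left trace: combining the Chebyshev-type concentration with the boundary condition $\partial_{x}c(t,-1)=(1-\delta(t))c(t,-1)<0$ — so that near $x=-1$ the profile is bounded below by the decaying exponential $c(t,-1)\,e^{-(\delta(t)-1)(1+x)}$ — should give an inequality $c(t,-1)\geq\kappa/\tilde{\bJ}(t)$ for some explicit $\kappa=\kappa(M)>0$, hence with (ii)
\[
\delta(t)\;\geq\;\frac{\kappa}{\tilde{\bJ}(t)}-C_{0}.
\]
Inserting this into the evolution law produces the closed differential inequality
\[
\dt\,\tilde{\bJ}(t)\;\leq\;-\,\frac{(M-1)\,\kappa}{\tilde{\bJ}(t)}+C_{1},
\]
whose right-hand side is bounded above by a strictly negative constant as soon as $\tilde{\bJ}(t)$ drops below the explicit threshold $(M-1)\kappa/C_{1}$. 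The hypotheses $\tilde{\bJ}(0)<(M-1)/2$ and $c_{0}(-1)-c_{0}(1)>1$ are exactly what is needed to place the initial datum in this regime and, via (i)--(iii), to keep the whole trajectory there; then $\dt\,\tilde{\bJ}\le-\epsilon_{0}<0$ for all $t$, so $\tilde{\bJ}$ would hit $0$ before $t=\tilde{\bJ}(0)/\epsilon_{0}$ — the announced contradiction.

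I expect the main obstacle to be sub-step (iii): turning the smallness of a single moment into a \emph{pointwise} lower bound on the boundary trace $c(t,-1)$ genuinely uses the equation near $x=-1$ (the no-flux condition together with the sign of the drift), not merely soft interpolation, and the constants must be sharp enough to dominate the linear terms — which is presumably the origin of the precise thresholds $(M-1)/2$ and $1$. A secondary, more technical point is to justify the boundary traces, the integrations by parts, and the differentiation of $\tilde{\bJ}$ at the regularity level of the weak solutions delivered by the existence theory; this would be handled by first establishing the differential inequality for smooth solutions of a suitable regularization of \eqref{eq:modele_1D}--\eqref{eq:CL_modele_1D} and then passing to the limit.
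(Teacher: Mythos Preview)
Your moment-method skeleton --- track $\tilde{\bJ}(t)$, derive $\dt\,\tilde{\bJ}=(1-M)\delta(t)-\tilde{\bJ}+M$, and close a differential inequality once $\delta(t)\gtrsim 1/\tilde{\bJ}(t)$ --- is exactly the engine behind the paper's proof. The divergence is in how the lower bound on $\delta(t)$ is obtained, and this is where your sketch has a real gap. Your sub-step~(iii) wants $c(t,-1)\ge\kappa/\tilde{\bJ}(t)$ from a Chebyshev concentration plus the boundary slope, but Chebyshev only controls an \emph{average} of $c$ on a short interval near $-1$; passing to the pointwise boundary value needs $c(t,\cdot)$ to be decreasing, and the clean interpolation (Jensen on the density $-\partial_x c/(c(-1)-c(1))$) needs it globally. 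For the original equation this monotonicity is not free: $u=\partial_x c$ picks up a zeroth-order source from the $x$-dependent drift, and your invariance claim $\{\delta>1\}$ in sub-step~(i) is itself tied to the sign of $u$ at the boundary, so (i) and (iii) are coupled in a way your outline does not resolve.

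The paper avoids this by a two-step detour you have not anticipated. First it replaces the drift $x+\delta(t)$ by the constant drift $-1+\delta(t)$, giving the modified problem \eqref{eq:1D_modified}--\eqref{eq:1D_modified_bord}; for \emph{that} equation $u=\partial_x c$ satisfies a clean linear parabolic equation with no source, so monotonicity is preserved by the strong maximum principle, Jensen yields
\[
(M-2c(t,1))^{2}\le (c(t,-1)-c(t,1))\bigl(2\tilde{\bJ}(t)-4c(t,1)\bigr),
\]
and this simultaneously gives $\delta>1$, $\tilde{\bJ}<(M-1)/2$ for all time, and the blow-up of $\tilde{\bJ}$ (Lemma~\ref{lem:decroissant} and Proposition~\ref{faibleBDP1:finite}). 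Second, the conclusion is transferred back to \eqref{eq:modele_1D} not at the level of $c$ but of the cumulative distribution $C(t,x)=\int_{-1}^{x}c$: the modified solution is a \emph{subsolution} of the integrated original equation (since $-(x+1)\partial_x C\le 0$), and a concentration--comparison principle (Lemma~\ref{lem:ccp_sub_super}, strong maximum principle plus Hopf) gives $\underline{C}<C$, forcing blow-up of $C$. In short, the missing ingredient in your plan is this auxiliary-equation-plus-comparison device; without it, your sub-steps (i) and (iii) remain open.
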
 
 
In Figure \ref{simus:cas_direct}, we illustrate Theorems \ref{th:1D} and \ref{th:BU}. 

Although in the present biological context, blow-up of solutions is interpreted as cell polarisation, such a blow-up in finite time might be questionable. Indeed a strong instability drives the system towards an inhomogeneous state and blow-up corresponds to the case where the drift becomes infinite. The boundary condition \eqref{eq:CL_modele_1D}  which is responsible for infinite drift turns out to be unrealistic from a biophysical viewpoint. On the way towards a more realistic model, we distinguish between cytoplasmic content $c(t, x)$ and the concentration of trapped molecules on the boundary at $x = \pm 1$ : $\mu_\pm (t)$. Then the  exchange of molecules at the boundary is described by very simple kinetics.

\begin{figure}\label{simus:cas_direct}
\centering
\includegraphics[scale=0.3]{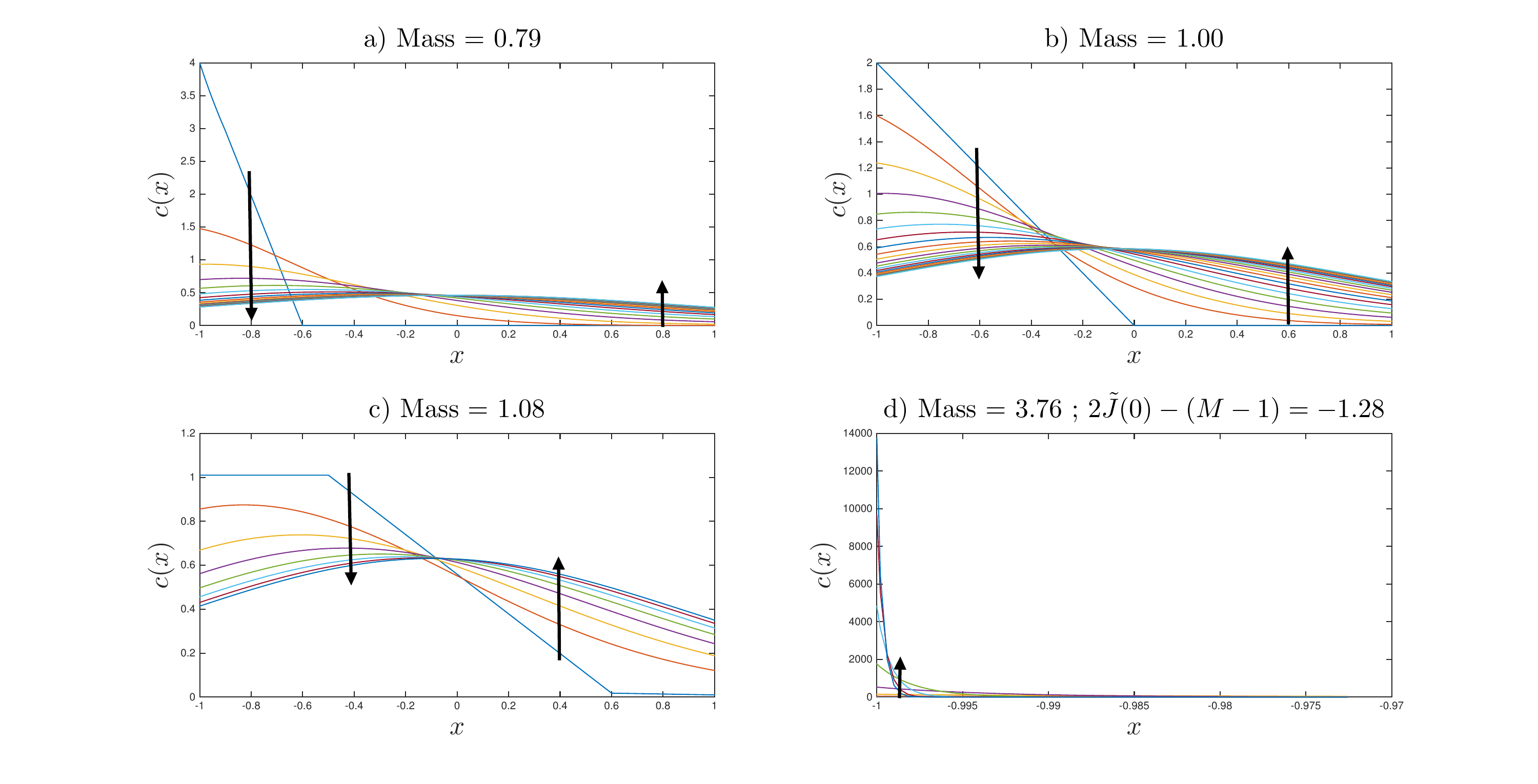}
\caption{Numerical simulations of the spatial concentration profile $c(x)$ of the marker. Each plot corresponds to a different initial profile and mass :  a) sub-critical case ; b) critical case ; c), d) super-critical case. Each curve represents the concentration profile at a specific time. Parameters: $T=4$ ; $dt=10^{-2}$ ; $dx = 2* 10^{-3}$.}
\end{figure}


The plan of this work is as follows. In Section \ref{sec:model}, we introduce the model. In Section \ref{sec:1D}, we analyse with full details
the one-dimensional case. In Section \ref{sec:ODE/PDE}, we briefly study a model with exchange of markers at the boundary in the one-dimensional case.

\section{The model}\label{sec:model}

Our purpose in this section is to derive the model given by equations \eqref{eq:modele_CD} - \eqref{eq:pression} - \eqref{eq:cell_vel} which describes the behaviour of an active viscous fluid featuring the cytoskeleton. 

\subsection{Main constitutive assumptions}

We consider a two-dimensional layer of viscous fluid, representing the cell cytoskeleton, surrounded by a rigid membrane. Following \cite{Jul_Kru_Prost_Joanny, Jul_Kru_Prost_Joanny_2, Blanch}, to model actin polymerisation and depolymerization,  we add active properties to the viscous fluid. The first active property we consider is the out-of-equilibrium polymerization of the fluid. In a cell, actin monomers are added to actin filaments by the consumption of the biological fuel ATP. Other proteins regulate the nucleation and polymerization of actin filaments. It is commonly observed that actin polymerization activators such as WASP proteins preferentially locate along the cell membrane \cite{Ridley_LeadingEdge}. For this reason we suppose that the fluid is polymerised at the membrane. 
The main novelty of our work relies on the coupling between actin polymerization and a biological marker which is transported by actin flows. Its aggregation in a part of the membrane characterizes the rear of the cell, hence its polarisation. This marker could be an antagonist to polymerization-inducing molecules (Rac1, Cdc42), such as RhoA, Arpin, or even myosin II (see \cite{arpin, Gautreau_review}).
Furthermore polymerization is balanced by depolymerization, which we assume to occur uniformly at a constant rate in the cell body, to ensure the renewal of ressources for polymerisation. polymerization and depolymerization induce an inward flow which rubs on the substrate. This friction is responsible for the cell displacement. 

 \begin{figure}\label{fig:UCSP}
 \centering
 \includegraphics[scale=0.25]{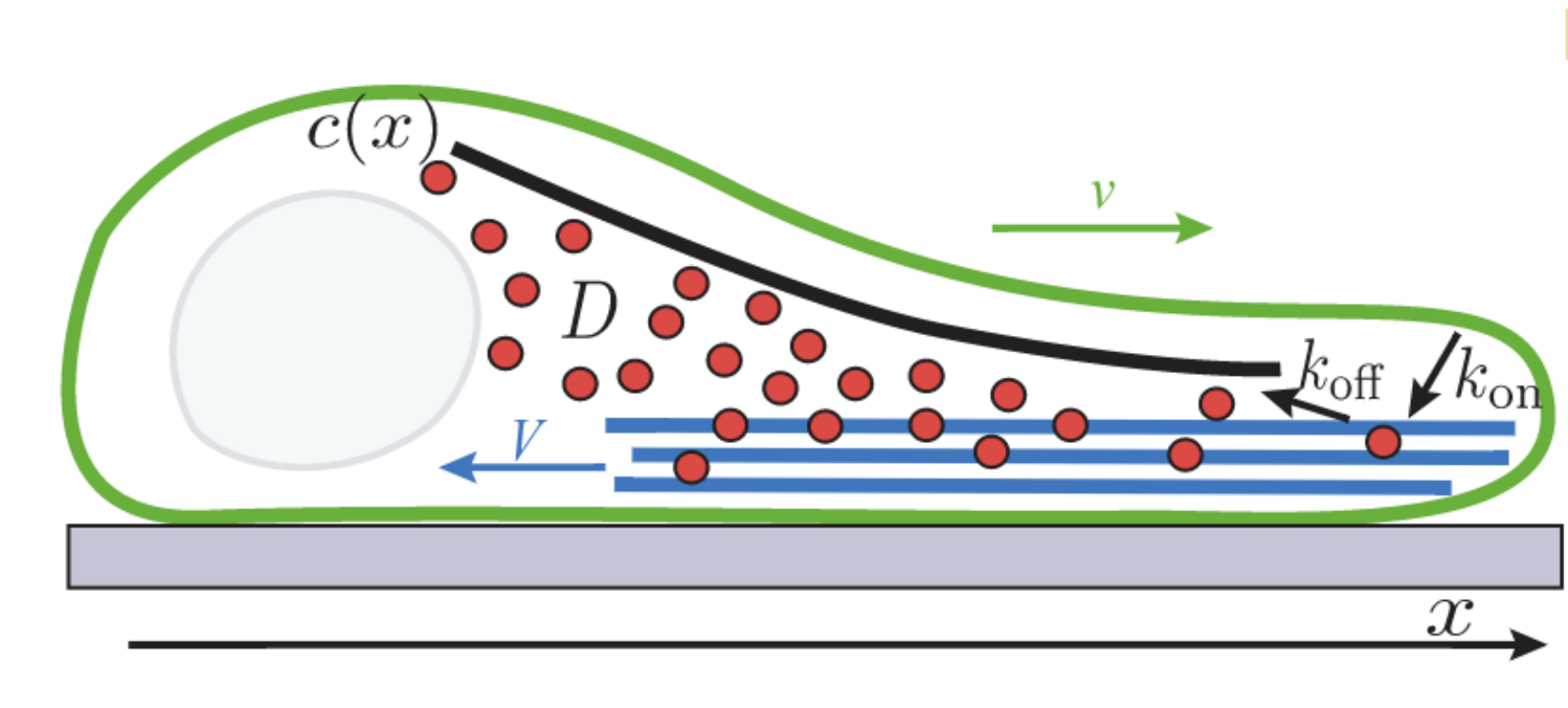}
 \caption{Interaction between actin flows and a molecular polarity marker \cite{Maiuri}.}
 \end{figure}

More precisely, we consider an active Darcy flow, which models the actin cytoskeleton, inside a (moving) domain $\Omega(t)\subset \R^2$, where $\Omega(0)$ is a disc of radius $R>0$. The domain moves by translation with a velocity $V(t) \in \R^2$. We introduce the fluid pressure $\tilde{P}(t,X)$, and $\tilde{U}(t,X)$ the actin filaments velocity. Finally, we denote $\tilde{C}(t,X)$ the concentration of molecular marker. All these quantities are defined for $X\in \Omega(t)$ and $t>0$. We assume that the fluid density $\rho(t,X)\equiv \rho$ is constant in time and space. Then, the fluid problem writes
\begin{equation}\label{eq:poisson}
\begin{cases}
\Div (\tilde{U})=-\frac{1}{\xi}\Delta \tilde{P}  = -k_d & \mbox{ in } \Omega(t)\, ,\\
\tilde{P}  = k _p  & \mbox{ on } \partial \Omega(t)\, .
\end{cases}
\end{equation}
The boundary condition accounts for polymerization at the edge of the cell, while $-k_d$ describes depolymerization in the cell body. 

\begin{remark}
Note that equation \eqref{eq:poisson} is similar but different from the model first introduced in \cite{Blanch}. Indeed in our case the polymerization is modeled through a pressure term and not a velocity one. 
\end{remark}

In the limit of low Reynolds number, viscous forces dominate over inertial forces and the Navier-Stokes equation simplifies to the force balance principle:
\begin{equation}\label{Newton}
- \Div \sigma  = f \qquad \textrm{ in } \Omega(t) \, ,
\end{equation} 
where  $\sigma $, the stress tensor, is given by 
\begin{equation}\label{newtonian}
\sigma = \mu \left( \nabla \tilde{U} + {}^t\nabla \tilde{U}\right) -\tilde{P} \textrm{ Id}\, ,
\end{equation}
with $\mu$ being the viscosity. 
Since the layer is placed on a substrate, we consider a friction force
\begin{equation}\label{body_force}
f  = -\xi \tilde{U}\, ,
\end{equation} 
where $\xi$ is an effective friction coefficient.

 Following \cite{Callan-Jones_Joanny_Prost}, we neglect viscosity arising from the polymer-polymer and polymer-solvent friction forces and consider the limit $\mu\to 0$. This reduces to 
\begin{equation}\label{eq:darcy}
\nabla \tilde{P}=-\xi \tilde{U}\qquad \textrm{ in } \Omega(t)\, .
\end{equation}
Notice that the pressure, hence the friction force, acts for the integrity of the fluid (no phase separation).

\begin{remark}
If we were considering a deformable cell, see \cite{Blanch}, it would have been natural to assume that the pressure $\tilde{P}$ satisfies
$\tilde{P}= -\gamma \kappa$  in $\partial \Omega(t)$,
where $\kappa$ is the curvature of $ \partial \Omega(t)$ and $\gamma \geq 0$ is the superficial tension of the cell membrane. Here we consider a caricatural situation since we assume that the cell domain is non-deformable and we do not impose any additional condition on $\tilde{P}$ on the boundary of $\Omega(t)$. 
\end{remark}

\begin{remark}{About the polymer density:}
this model can be derived from a more
realistic one, where polymerization and depolymerization locally modify the polymer
density. More precisely, write
\begin{equation}
\partial_t \rho + \Div(\rho \tilde{U}) = - k_d \rho\, ,
\end{equation}
where $\Omega(t) = \{\rho(t,\cdot) >0 \}$ denotes the region occupied by the cytoskeleton. To account for polymerization that takes place at the edge of the cell and which consists in a local increase in actin concentration, we impose on the cell membrane a jump on the actin concentration:

\begin{equation}
\rho = \rho_0  + \varepsilon k_p \quad \text{ on } \partial \Omega(t)\,,
\end{equation}
where $\varepsilon$ is a small parameter and $\rho_0$ a constant. One can also assume that the polymerization rate is a continuous non-zero polar function in an annulus initially defined as $\overline{\Omega}(0)\setminus B(0,R-\lambda)$, of width $\lambda >0$, see e.g \cite{Kru_Joanny_Jul_Prost_contractility}.\par 
As it is classical, see \cite{Callan-Jones_Voituriez}, in addition, we assume that $\tilde{P}$ is a function of $\rho$:
\begin{equation*}
\tilde{P}=\frac{1}{\varepsilon}(\rho-\rho_0)\, ,
\end{equation*}
we then get the following problem:
\begin{eqnarray*}
\partial_t \rho -\frac{1}{\xi}\Div\left(\rho \nabla \left(\frac{\rho}{\varepsilon}\right ) \right )  &= -k_d \rho & \qquad \mbox{ in } \Omega\, ,\\
\rho  &= \rho_0 + \varepsilon k _p  & \qquad \mbox{ on } \partial \Omega\, .
\end{eqnarray*}
Formally the limit $\varepsilon \to 0$ of the previous model leads to the Poisson problem \eqref{eq:poisson}. This amounts to saying that the osmotic pressure $\tilde{P}$ ensures at all time that the polymer density stays constant. The rigorous justification of this limit is not our purpose here, and will be the object of a future work. 
\end{remark}

Finally, we have to prescribe the domain velocity arising from friction forces. Since the gel layer is at mechanical equilibrium, the cell moves as a consequence of the inside flow rubbing on the substrate, hence we will consider a moving domain. Friction forces occur at the microscopic scale, and mesoscopic tension forces also are at play. However, for the sake of simplicity, we will neglect this heterogeneity to consider a global friction coefficient $\gamma$. We write for all $t>0$ 
\begin{equation}\label{velocity}
V(t) = -\gamma \int_{\Omega(t)} \tilde{U}(t,X) \dx X.
\end{equation}

The main novelty of our work is to consider that $k_p$ is a function of $c$, the concentration of a biological marker. This marker is assumed to diffuse and to be transported by the actin filaments, modeled by the previously described fluid. 
At the boundary, we prescribe a zero flux condition to ensure mass conservation. The corresponding problem writes
\begin{equation*}
\partial_t \tilde{C}(t,X) + \Div  \left( \tilde{U}(t,X) \tilde{C}(t,X) - D \nabla \tilde{C}(t,X)\right) = 0 \quad\text{ for } X\in \Omega(t), \, t>0\,  ,
\end{equation*}
with the zero-flux boundary condition
\begin{equation*}  
\left(D \nabla \tilde{C}(t,X) - \tilde{C}(t,X) \tilde{U}(t,X)  \right)\cdot \bn =  0 \quad \text{ for } X\in  \partial \Omega(t), \, t>0  \, ,
\end{equation*}
where $\bn$ is the outward unit normal to the boundary.
The zero flux boundary condition ensures that the total mass is preserved: $\frac{\dx}{\dx t} \int_{\Omega(t)} \tilde{C}(t,X) \dx X=0$.

\begin{remark}
The non-deformability of $\Omega(t)$ is an important downside of the model, that conceals fundamental modelization issues. Indeed, the cell velocity is defined globally, preventing the description of local effects: experimentally, it is observed that the activity of the cytoskeleton deforms the membrane, leading to a displacement while exerting a geometric feedback on actin flows.
\par 
Notice that our choice for the friction force amounts to consider friction arising from both the retrograde flow and the displacement, but their effects on the equations are the same in 1D. 
Hence, the role of motion in the initiation and maintenance of polarisation cannot be investigated properly, but this will be studied in future works in 2D and for a free-boundary model.
\end{remark}

In the spirit of \cite{Maiuri}, we will assume that polymerization occurs more likely at the boundary points where the marker concentration is the lower (the marker we consider is a rear marker). 

Recalling what was explained previously, the fluid problem writes
\begin{equation*}
\begin{cases}
\nabla \tilde{P}(t,X) =- \xi \tilde{U}(t,X) &  \text{ for } X\in \Omega(t), \, t>0 \, ,\\
\Div  \tilde{U}(t,X) = - K_d&  \text{ for } X\in \Omega(t), \, t>0 \, , \\
\tilde{P}(t,X) = f\left(\tilde{C}(t,X)\right) :=  \alpha -\beta \tilde{C}(t,X)&  \text{ for } X\in \partial \Omega(t), \, t>0 \, , 
\end{cases}
\end{equation*}

\begin{remark}
A more realistic boundary condition on $\tilde{P}$ would have been to impose that
\begin{equation*}
\tilde{P}(t,X) =  \Big(\alpha -\beta \tilde{C}(t,X) \Big)_+ \text{ for } X\in \partial \Omega(t), \, t>0 \, ,
\end{equation*}
where $(\cdot )_+$ denotes the positive part. Indeed the case where $\tilde{P}$ takes negative values is not realistic from the modelling viewpoint. 
\end{remark}

\begin{remark}\label{rem:autre_vit}
Another option for the cell velocity would be to choose
\begin{eqnarray*}
V_2(t) &=& -\gamma \int_{\Omega(t)} \nabla \tilde{C}(t,X) \dx X = -\gamma \int_{\partial \Omega(t)} \tilde{C}(t,X) \bn \dx \sigma \, .
\end{eqnarray*}
However this latter choice is less relevant from a biological viewpoint, since the global cell displacement is a consequence of the actin flow rubbing on the substrate. 
\end{remark}

\paragraph{Problem formulation on a fixed domain}
We now define the problem on a fixed domain $\Omega_0 := \Omega(0)$. In such a case the domain motion appears in the problem formulation. 
As $(\tilde{P},\tilde{U},\tilde{C})$ are functions on $\Omega(t)$, we denote $(P,U,C)$ their analogous on $\Omega_0$. Moreover we define the following map: 
\begin{displaymath}
\begin{array}{lrcl}
L(.,t) :  & \Omega_0 &\longrightarrow&\Omega(t) \\
 & x & \mapsto  & X = x + \int_0^t V(s) \dx s = L(t,x),
\end{array}
\end{displaymath}
which gives $C(t,x) = \tilde{C}(t,L(t,x))$ (for example for $C$).

Let us now rewrite the fluid problem on $\Omega_0$. To do so we observe that for all $x\in  \Omega_0$ and for all $t>0$:
\begin{equation*}
\partial_t C(t,x) = \partial _t\tilde{C}(t,L(t,x)) + \underbrace{\partial _t L(t,x)}_{= V(t)}  \nabla  _X \tilde{C}(t,X)=\frac{\DD \tilde{C}}{\DD t},  
\end{equation*}
where $\frac{\DD }{\DD t}$ denotes the total derivative. The convection-diffusion equation rewrites as
\begin{displaymath}
\left\lbrace
\begin{array}{rclr}
\displaystyle \frac{\DD}{\DD t}\tilde{C}(t,X) +  \Div \left( (\tilde{U}(t,X)-V(t)) \tilde{C}(t,X) - D \nabla \tilde{C}(t,X)\right) &=& 0 &\,  X\in \Omega(t), \, t>0 ,\\
\displaystyle \left( D \nabla \tilde{C}(t,X) - \tilde{C}(t,X)(\tilde{U}(t,X)-V(t))  \right) \cdot \bn &=&  0&\,  X\in  \partial \Omega(t), \, t>0 .
\end{array}\right.
\end{displaymath}
Moreover, the Jacobian matrix related to $L$ is the identity matrix (as $V$ is space-independent), leading to $\nabla_X \tilde{C}(t,X) = \nabla_x C(t,x)$. This, combined with $\tilde{U}(t,X) = U(t,x)$, yields that
\begin{displaymath}
\left\lbrace
\begin{array}{rcll}
\displaystyle \partial_t C(t,x) + \textrm{div } \left[ (U(t,x)-V(t)) C(t,x) - D \nabla C(t,x)\right] &=& 0 &\, \text{ for } x\in  \Omega_0, \, t>0 ,\\
\displaystyle \left( D \nabla C(t,x) - C(t,x) (U(t,x)-V(t))  \right) \cdot \bn &=&  0& \, \text{ for } x\in  \partial \Omega_0, \, t>0 .
\end{array}\right.
\end{displaymath}
We can check again mass conservation: $\frac{\dx}{\dx t} \int_{\Omega_0} C \dx x=0$.

\paragraph{Nondimensionalization}
Let us introduce the following typical quantities for nondimensionalization: 
$\overline{L} = 2R$, where $R$ is the radius of $\Omega_0$, $\overline{P} = \alpha= \frac{\overline{F}\, \overline{L}^2}{\overline{L}}$, $\overline{F}=\frac{\alpha}{\overline{L}}=\frac{\alpha}{2R}=\xi \overline{V}$, $\overline{V}=\frac{\alpha }{2R\xi }$, $\overline{T} = \frac{\overline{L}}{\overline{V}} = \frac{4R^2 \xi}{\alpha }$ and $\overline{C}=\overline{L}^{-2}$.

\begin{remark}
It has to be noticed that the force we consider is actually a force per unit of area. This explains the expression used for the typical pressure $\overline{P}$, corresponding to a force divided by a length (in 2D).
\end{remark}
\begin{remark}
The typical pressure corresponds to a maximal incoming pressure of the fluid at the boundary of the domain.
\end{remark}

Now, write $(p,u,c,v)$ the nondimensionalized analogous to $(P,U,C,V)$. 
The nondimensionalized problem writes: 

\begin{equation}\label{fluid_adim}
\begin{cases}
            \nabla p = - u          & \text{ on } \Omega_0\, ,   \\
            \Div u  = - \frac{\xi4R^2}{\alpha} K_d =: -k_d  & \text{ on } \Omega_0\, ,  \\
			p = 1-\delta c  &  \text{ on } \partial \Omega_0\, , 
\end{cases}
\end{equation}
where $\delta:=\frac{\beta \overline{C}}{\alpha}$ and the domain velocity is: 
\begin{equation}\label{vcell_adim}
 v(t) = -\gamma \int_{\Omega_0} u\,  \dx x\, .
\end{equation}

The convection-diffusion problem is
\begin{equation}\label{conc_adim}
\begin{cases}
 \partial_t c + \Div  \left(  (u-v) c - D' \nabla c\right) = 0 &\, \quad \text{ on } \Omega_0\, ,\\
 \left(D' \nabla c - (u-v) c\right) \cdot \bn = 0& \, \quad  \text{ on } \partial \Omega_0\, , 
\end{cases}
\end{equation}
where we have introduced $D' = \frac{D}{\overline{L}\, \overline{V}} =  \frac{D\xi }{\alpha}$.
Moreover, the global mass is prescribed: 
\begin{equation}\label{mass}
\int_{\Omega_0} c \,  \dx x = M\, .
\end{equation}

\subsection{The one-dimensional case}

The first equation of (\ref{fluid_adim}) rewrites 
\begin{equation}\label{eq:vitesse_1D}
u(t,x) = -\partial_x p(t,x)  \, ,
\end{equation}
and using \eqref{vcell_adim}, it leads to 
\begin{equation}\label{velocity_pressure}
v(t) = \gamma(p(t,b)-p(t,a))\, .
\end{equation}
Moreover, differentiating \eqref{eq:vitesse_1D} and using that $\partial _x u= -k_d$,  we find $\partial_{xx}p(t,x) = k_d$ and we can write
\begin{equation}
p(t,x) = \frac{k_d}{2}(x-a)^2 + d(x-a)+p(t,a)\, ,
\end{equation}
where
\begin{displaymath}
d =  \frac{p(t,b)-p(t,a)}{b-a}-\frac{k_d}{2}(b-a)\, .
\end{displaymath}
Now, replacing $p$ in the expression of $u$ this gives 
\begin{equation*}
u(t,x) = -k_d \left(x-\frac{a+b}{2}\right) - \frac{p(t,b)-p(t,a)}{b-a}\, .
\end{equation*}

Finally recalling the boundary conditions in (\ref{fluid_adim}), we obtain the following non-linear and non-local convection-diffusion equation
\begin{equation}\label{mod:1D_renorm}
\begin{cases}
\partial _t c &= \partial_x\left( (v-u) c + D' \partial _x c\right)  \qquad \textrm{ on } (a,b), \textrm{ for } t>0\, ,\\
0&=(v-u) c + D' \partial _x c \qquad \textrm{ on } \{a,b\}, \textrm{ for } t>0\, ,
\end{cases}
\end{equation}
with a velocity $v-u$ given by: for all $x\in (a,b)$ and  for all $ t>0$
\begin{equation}\label{u}
v(t) - u(t,x) = k_d \left(x-\frac{a+b}{2}\right) +\left(\delta \gamma + \frac{\delta }{b-a}\right)(c(t,a)-c(t,b))\, ,
\end{equation}
and the domain velocity:
\begin{equation*}
v(t) = \gamma \delta \left(c(t,a)-c(t,b)\right)\, ,
\end{equation*}
with $\delta = \frac{\beta \overline{C}}{\alpha}>0$ and we recall that $D' = \frac{D\xi }{\alpha }\geq 0$.

\begin{remark}
Note that up to a constant we recover the cell velocity $v_2$, see Remark \ref{rem:autre_vit}.
\end{remark}

From now on, in order to simplify the computations we assume that $\Omega=(-1,1)$. In such a case the velocity simply rewrites as 
\begin{equation}\label{eq:vit_dim1}
v(t) - u(t,x) = k_d x +\left(\delta \gamma +  \frac{\delta}{2}\right) (c(t,-1)-c(t,1)).
\end{equation}
Finally, taking $k_d=D'=1$ and $\delta \left(\gamma + \frac{1}{2}\right)=1$, we obtain the problem \eqref{eq:modele_1D}-\eqref{eq:CL_modele_1D}.

We find a model that presents some similarities with the model studied in \cite{HBPV,CalvezMeunier,CalvezMeunier_Siam,Muller_Plos, Lepoutre_Meunier_Muller_JMPA} to describe yeast cell polarisation. The main difference here is the presence of an additional drift towards the cell center.

\section{The boundary non-linear Fokker-Planck equation in dimension 1}\label{sec:1D}

In this part we study the non-local and non-linear Fokker-Planck equation \eqref{eq:modele_1D} - \eqref{eq:CL_modele_1D} that we recall now: 
\begin{equation}\label{eq:1D}
\begin{cases}
\partial _t c(t,x) =  \partial _{x} \Big(\partial_x c(t,x)+\left(x + c(t,-1)-c(t,1)\right) c(t,x)\Big) \, , \\
\partial _x c(t,-1) +\left(c(t,-1)-c(t,1) -1\right) c(t,-1) =0 \, , \\
\partial _x c(t,1)+\left(c(t,-1)-c(t,1)+1\right) c(t,1)   =0  \, ,
\end{cases}
\end{equation}
and we prove Theorems \ref{th:1D}  and \ref{th:BU}. 

We are looking for a solution to \eqref{eq:1D}. As it is classical we start by giving a proper definition of weak solutions, adapted to our context:
\begin{definition}\label{def:weak}
We say that $c(t,x)$ is a weak solution of (\ref{eq:1D})  on $(0,T)$ if it satisfies:
\begin{equation}
c\in L^\infty\left(0,T;L^1_+(-1,1)\right)\, , \quad \partial_x c \in L^1\left((0,T)\times (-1,1)\right)  \, , \label{eq:flux L1}
\end{equation}
and $c(t,x)$ is a solution of (\ref{eq:1D}) in the sense of distribution in $\mathcal D'(-1,1)$.
\end{definition}

Since the flux $\partial_x c(t,x) + \left( c(t,-1)-c(t,1) +x\right) c(t,x)$ belongs to $ L^1\left((0,T)\times (-1,1)\right)$, the solution is well-defined in the distributional sense under assumption (\ref{eq:flux L1}). In fact we can write
\[\int_0^T \left( c(t,-1)-c(t,1)\right) \D t   = - \int_0^T\int_{-1}^1 \partial_x c(t,x)\D x\D t\, .\]
Moreover, one has
\[2c(t,\pm 1) = \int _{-1}^1 (x\pm 1) \partial_x c(t,x)\D x  + \int _{-1}^1 c(t,x)\D x\, .\]

Note also that to prove existence of weak solutions in the sense of Definition \ref{def:weak}, one should perform a regularization procedure. In this work we focus on long-time dynamics and we will not detail such a regularization procedure which is classical, we refer to \cite{CalvezMeunier_Siam} for more details. 

Let us now observe that the non-negativity of a solution is preserved. Indeed if $c$ is solution in $L^1_x$, since $\mbox{sgn}(c) \partial_{xx}^2 c \le \partial_{xx}^2 |c|$, then 
\begin{align*}
  \frac{\mathrm d}{\mathrm dt} \int_{-1}^1 \left( |c| - c \right) \D x  \le 0. 
\end{align*}
This proves that if $|c_0| = c_0$ almost everywhere (initial data
non-negative) then $|c|=c$ almost everywhere for later times.

Moreover, weak solutions in the sense of Definition \ref{def:weak} are mass-preserving:  $M =\int_{-1}^1 c_0(x)\D  x = \int_{-1}^1 c(t,x)\D  x$. 
A simple computation on the first momentum defined by $\bJ(t) =\int_{-1}^1 x c(t,x)\D x$, leads to 
\begin{equation}\label{eq:first_momentum}
\frac{\D}{\D t} \bJ(t)=(1-M)\left(c(t,-1)-c(t,1)\right)-\bJ(t)\, .
\end{equation}
Note that $\bJ(t) \ge -M$ since $\int_{-1}^1 (x+1) c(t,x)\D x \ge 0$.
Here, similarly to \cite{CalvezMeunier, CalvezMeunier_Siam}, we will prove that the following dichotomy occurs:
\begin{itemize}
\item when $M\le 1$, the linear Fokker-Planck part drives the equation and the solution converges toward the unique stationary state, see sections~\ref{sec:M<1} and~\ref{sec:M=1},
\item when $M>1$, the equation admits three stationary states, two of them being non-homogeneous. Moreover, for an asymmetric enough initial condition, the solution blows up in finite time, see section~\ref{sec:M>1}.
\end{itemize}

\subsection{Global existence and asymptotic analysis for sub-critical mass $M<  1$ }\label{sec:M<1}

In this section we prove Theorem \ref{th:1D} in the case $M<1$. The proof is decomposed in three steps. First we compute the stationary state of \eqref{eq:1D}, then we build a Lyapunov functional, this allows establishing \emph{a-priori estimates}. Finally we investigate long-time behaviour of solutions in the case $M<1$ using entropy methods. We stress out that the method for proving global existence in this case strongly relies on the existence of a Lyapunov functional, Lemma \ref{lem:entropy u}. This is why we analyse the global existence and the long time behaviour simultaneously. Note that the proof is very close to the one given in \cite{CalvezMeunier_Siam} and in \cite{Lepoutre_Meunier_Muller_JMPA}. The differences concern the expression of the stationary state and the domain geometry. In \cite{CalvezMeunier_Siam, Lepoutre_Meunier_Muller_JMPA}, the stationary state is the family $\{\alpha \exp\left( -\alpha x -x^2/2\right)\}_{\alpha}$, and the domain is the half-line. 

Let us start with the existence of a unique stationary state. 
\begin{lemma}\label{lemma:ESsscritique}
If $M<1$ equation \eqref{eq:1D} admits a unique stationary state given by $G_M=M\exp(-x^2/2)/\int_{-1}^1 \exp(-y^2/2) \D y$.
\end{lemma}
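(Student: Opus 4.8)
Proof plan for Lemma~\ref{lemma:ESsscritique}.

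\medskip

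The plan is to turn stationarity into an explicit ODE, integrate it, reduce the problem to counting the roots of a single scalar equation, and settle that count with one algebraic identity.

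First I would observe that a stationary solution $c$ of \eqref{eq:1D} satisfies $\partial_x\big(\partial_x c+(x+a)c\big)=0$ on $(-1,1)$ with the shorthand $a:=c(-1)-c(1)$, so the flux $\partial_x c+(x+a)c$ is constant there; evaluating it at $x=\pm1$ and using the two boundary relations of \eqref{eq:1D} shows that this constant is $0$. Thus $c$ solves the linear ODE $\partial_x c=-(x+a)c$, whose solutions are the multiples of $e^{-x^2/2-ax}$, and since $c\ge 0$ with $\int_{-1}^1 c\,\D x=M>0$ we get $c(x)=K\,e^{-x^2/2-ax}$ for some $K>0$. Writing $Z(a):=\int_{-1}^1 e^{-x^2/2-ax}\,\D x$, the mass constraint gives $K=M/Z(a)$ while the definition of $a$ gives $a=c(-1)-c(1)=2Ke^{-1/2}\sinh a$; eliminating $K$ leaves the single scalar equation
\[
a\,Z(a)=2M\,e^{-1/2}\sinh a .
\]
The value $a=0$ always solves it, and it forces $K=M/Z(0)$, i.e. $c=G_M$; conversely $G_M$ has vanishing flux and meets the boundary conditions by symmetry, so it is indeed a stationary state. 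Everything therefore reduces to showing that $a=0$ is the \emph{only} root when $M<1$.

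For this I would exploit the identity obtained by integrating $\frac{\D}{\D x}e^{-x^2/2-ax}=-(x+a)e^{-x^2/2-ax}$ over $(-1,1)$, namely $a\,Z(a)=2e^{-1/2}\sinh a-\int_{-1}^1 x\,e^{-x^2/2-ax}\,\D x$, together with the elementary computation $\int_{-1}^1 x\,e^{-x^2/2-ax}\,\D x=-2\int_0^1 x\,e^{-x^2/2}\sinh(ax)\,\D x$ (change $x\mapsto-x$ on $(-1,0)$). Substituting both into the scalar equation yields
\[
(M-1)\,e^{-1/2}\sinh a=\int_0^1 x\,e^{-x^2/2}\sinh(ax)\,\D x .
\]
For $a\ne 0$ the right-hand side has the same strict sign as $a$, while the left-hand side, since $M<1$, has the opposite sign --- a contradiction. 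Hence $a=0$, so $c=G_M$, which also gives uniqueness.

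The only genuinely delicate point is this last step, the sign analysis of the reduced equation; that is where the argument really lives, although once the identity above is in hand it is completely elementary. A more computational alternative --- showing directly that $a\mapsto a\,Z(a)/(2e^{-1/2}\sinh a)$ stays strictly above $1$ for all $a$ by a monotonicity study --- would also work, but I would avoid it as messier.
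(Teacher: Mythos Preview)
Your argument is correct. Both you and the paper reduce to the same one-parameter family $c(x)=K\,e^{-x^2/2-ax}$ with the self-consistency constraint $a=c(-1)-c(1)$ and the mass constraint, and both must show no $a\neq 0$ survives when $M<1$. The computations at that step differ, however. The paper fixes the normalisation via self-consistency first, obtaining $G_\alpha(x)=\dfrac{\alpha}{1-e^{-2\alpha}}\,e^{-\alpha(x+1)-(x^2-1)/2}$, then substitutes $y=\alpha(x+1)$ in the mass integral and uses the pointwise bound $e^{-\frac12((y/\alpha-1)^2-1)}\ge 1$ on $[0,2\alpha]$ to get $\int G_\alpha>1$ directly. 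You instead fix the normalisation via mass and derive, through the integration-by-parts identity, the reduced equation $(M-1)e^{-1/2}\sinh a=\int_0^1 x\,e^{-x^2/2}\sinh(ax)\,\D x$, which yields an immediate sign contradiction. Amusingly, the quantity $aZ(a)/(2e^{-1/2}\sinh a)$ that you dismiss as ``messier'' to analyse is exactly the paper's $P(\alpha)$, and the paper's substitution makes $P(\alpha)>1$ a one-line estimate rather than a monotonicity study. Both routes are clean; yours handles $a>0$ and $a<0$ in one stroke, while the paper treats the second case ``similarly''.
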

\begin{proof}
An easy computation shows that any stationary state for \eqref{eq:1D} is either $G_M$ (which is symmetric) or of the form $G_{\alpha}$ for $\alpha = G_{\alpha}(-1)- G_{\alpha}(1)\neq 0$ to be found. For $\alpha >0$, it writes
\begin{equation} \label{eq:stat state rescaled}
G_{\alpha}(x) =  \frac{\alpha }{1-\exp(-2\alpha)} \exp\Big(-\alpha (x+1) - \frac{x^2-1}{2}\Big)\, .
\end{equation}  

It remains to find $\alpha$ such that the mass constraint $\int_{-1}^1 G_{\alpha}(x)\D x = M$ is satisfied. This rewrites $P(\alpha) = M$, $P$ being the function defined by:
\begin{eqnarray*}
P(\alpha) &=& \int_{0}^{2\alpha}  \frac{1 }{1-\exp(-2\alpha)} \exp\left(-y - \frac{1}{2}\left(\big(\frac{y}{\alpha}-1\big)^2-1\right)\right) \D  y\, . 
\end{eqnarray*}
We observe that $ P(\alpha) > \int_{0}^{2\alpha}  \frac{1 }{1-\exp(-2\alpha)} \exp(-y )\D  y = 1$, hence  there is no stationary state of the form $G_{\alpha}$ with $\alpha >0$. 

The case $\alpha <0$ is done similarly.
\end{proof}

\paragraph{Lyapunov functional}
As it is classical we note the relative entropy
\begin{equation}\label{eq:relative_entropy}
\bH (u|v) = \int_{-1}^1 u(x) \log \left(\frac{u(x)}{v(x)} \right) \D x,
\end{equation}
and the Fisher information
\begin{equation*}
\bI (u|v) = \int_{-1}^1  u(x)\left(\partial_x\left(\log \frac{u(x)}{v(x)}\right)\right)^2 \D x.
\end{equation*}
Note that $\bH(c|G_M) \ge M\log M$ for all $t>0$ by Jensen's inequality. 
We introduce a Lyapunov functional for equation \eqref{eq:1D}:
\begin{equation}\label{eq:lyapunov}
\bL(t)=\bH(c|G_M)  + \frac{\bJ(t) ^2}{2(1-M)}   \, ,
\end{equation}
where $\bJ$ is given by \ref{eq:first_momentum}.
Let $\Gamma_{c}$ be defined by
\begin{equation}\label{Lyapounov_ss_crit_1}
\Gamma_{c}(x) =  A_c\exp \left(-\left(c(t,-1)-c(t,1) \right) \, x -\frac{x^2}{2}\right)\, ,
\end{equation}
with 
\begin{equation}\label{cte_Lyapounov_ss_crit_1}
A_c=\frac{M}{\int_{-1}^1 \exp\left( -\left(c(t,-1)-c(t,1) \right) \, y -\frac{y^2}{2}\right)\D y}\, .
\end{equation}

\begin{lemma} \label{lem:entropy u}
For the problem \eqref{eq:1D}, if $M<1$, then the Lyapunov functional $\bL$ is non-increasing:
\begin{equation}\label{eq:dissipation u_0}
\frac{\D}{\D t} \bL(t) = - \bD(t) \leq 0\, , 
\end{equation}
where the dissipation is
\begin{equation}\label{eq:dissipation u}
\bD(t) = \bI(c|\Gamma_{c}) + \frac{1}{(1-M)}\Big((c(t,-1)-c(t,1))(1-M)-\bJ(t)\Big)^2\, .
\end{equation}
\end{lemma}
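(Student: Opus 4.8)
The plan is a direct Lyapunov computation: differentiate $\bL$ in time, use the zero-flux boundary conditions to discard all boundary terms, and use the definition of $\Gamma_c$ to recognize the diffusive part of the entropy dissipation as a relative Fisher information. Throughout write $\Delta(t):=c(t,-1)-c(t,1)$ and let $F(t,x):=\partial_x c(t,x)+\big(x+\Delta(t)\big)c(t,x)$ be the flux, so that \eqref{eq:1D} reads $\partial_t c=\partial_x F$ with $F(t,\pm1)=0$. The algebraic heart of the matter is that $\partial_x\log\Gamma_c=-\Delta-x$, hence $F=\Gamma_c\,\partial_x(c/\Gamma_c)$ and $F/c=\partial_x\log(c/\Gamma_c)$, so that $\int_{-1}^1 F^2/c\,\D x=\bI(c|\Gamma_c)$; note also that $\Gamma_c$ has mass $M$ by the normalization \eqref{cte_Lyapounov_ss_crit_1}, i.e. it is the instantaneous equilibrium of the drift frozen at time $t$, which is the reason it is the right reference profile here.

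First I would treat the entropy term. Since $\log G_M$ equals a constant minus $x^2/2$ and mass is conserved, $\frac{\D}{\D t}\bH(c|G_M)=\int_{-1}^1\partial_t c\,(\log c+x^2/2)\,\D x$. Substituting $\partial_t c=\partial_x F$ and integrating by parts, the boundary term $[F(\log c+x^2/2)]_{-1}^1$ vanishes because $F(t,\pm1)=0$, leaving $-\int_{-1}^1 F(\partial_x c/c+x)\,\D x$. Using $\partial_x c/c+x=F/c-\Delta$ together with $\int_{-1}^1 F\,\D x=-\frac{\D}{\D t}\bJ(t)=\bJ(t)-(1-M)\Delta(t)$ (obtained by testing \eqref{eq:1D} against $x$, cf. \eqref{eq:first_momentum}) yields
\[
\frac{\D}{\D t}\bH(c|G_M)=-\bI(c|\Gamma_c)+\Delta(t)\big(\bJ(t)-(1-M)\Delta(t)\big).
\]
Next, differentiating the quadratic term and invoking \eqref{eq:first_momentum} again gives $\frac{\D}{\D t}\frac{\bJ(t)^2}{2(1-M)}=\frac{\bJ(t)}{1-M}\big((1-M)\Delta(t)-\bJ(t)\big)$. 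Adding the two identities and completing the square in the pair $(\Delta,\bJ)$ produces
\[
\frac{\D}{\D t}\bL(t)=-\bI(c|\Gamma_c)-\frac{1}{1-M}\big((1-M)\Delta(t)-\bJ(t)\big)^2=-\bD(t),
\]
which is exactly \eqref{eq:dissipation u}; and $\bD(t)\ge0$ precisely because $M<1$ makes $1/(1-M)>0$ (this is the only place where sub-criticality enters).

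The main obstacle is not this formal algebra but its justification for weak solutions in the sense of Definition \ref{def:weak}: differentiating $\bH(c|G_M)$ in time and the integration by parts that produces $\bI(c|\Gamma_c)$ require more regularity and integrability than is a priori available, and $\log c$ is singular on $\{c=0\}$. Since Lemma \ref{lem:entropy u} is itself the engine of the a priori estimates underlying Theorem \ref{th:1D}, the clean route is to establish the identity first on smooth, strictly positive solutions of a regularized problem (added viscosity, truncated or mollified drift coefficients), and then pass to the limit, using lower semicontinuity of $\bH$ and $\bI$ to retain the inequality $\frac{\D}{\D t}\bL\le0$ for the limiting weak solution. I expect this approximation-and-passage-to-the-limit step to be the only genuinely delicate point, the computation above being the conceptual core.
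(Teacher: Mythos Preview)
Your proposal is correct and follows essentially the same approach as the paper: both compute $\frac{\D}{\D t}\bH(c|G_M)$ by integration by parts against the flux, arrive at the expression $-\bI(c|\Gamma_c)+(M-1)\Delta^2+\Delta\bJ$, and then combine with $\frac{\D}{\D t}\frac{\bJ^2}{2(1-M)}$ via \eqref{eq:first_momentum} to obtain $-\bD(t)$. Your organization is a bit tidier---you use the flux notation $F$ and complete the square directly, whereas the paper inserts two intermediate identities \eqref{eq:entropy J}--\eqref{eq:dissipation J} before recombining---but the content is the same; your closing remark on regularization matches exactly what the paper defers to a standard approximation procedure, the lemma being stated and used for classical solutions.
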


\begin{proof}
We compute the evolution of the entropy:
\begin{eqnarray*}
& &\frac{\D }{\D t} \bH(c|G_M)(t)= \int_{-1}^1 \partial _t c(t,x)\left( \log (c(t,x))   + \frac{x^2}{2}  + 1 - \log \left(\frac{M}{\int_{-1}^1 e^{-\frac{y^2}{2}} \D y}\right) \right) \D  x \, , \\
\end{eqnarray*}
where $\int_{-1}^1 \partial _t c(t,x)( 1 - \log(M/\int_{-1}^1 e^{-\frac{y^2}{2}} \D y )) \D  x =0$ by mass conservation. Hence,
\begin{eqnarray}
&&\frac{\D }{\D t} \bH(c|G_M)(t) =  - \int_{-1}^1  \bigg( \partial_x c(t,x) + \big(x+c(t,-1)-c(t,1)\big) c(t,x)  \bigg) \left( \frac{ \partial_x c(t,x)}{c(t,x)}   + x \right) \D  x \nonumber \\
& =&  - \int_{-1}^1 c(t,x) \Big( \partial_x \log c(t,x) + x  \Big)^2\D  x +
\left(c(t,-1)-c(t,1)\right)^2 - \left(c(t,-1)-c(t,1)\right) \bJ(t) \nonumber \\
& = & - \int_{-1}^1 c(t,x) \bigg( \partial_x \log c(t,x) + x +c(t,-1)-c(t,1) \bigg)^2\D  x \nonumber\\
& &\qquad +
(M-1)\left(c(t,-1)-c(t,1)\right)^2 + \left(c(t,-1)-c(t,1)\right) \bJ(t) \, . \label{eq:u0}
\end{eqnarray}
We can eliminate $c(t,-1)-c(t,1)$ from (\ref{eq:u0}) in the two following steps:
\begin{eqnarray}
\left(c(t,-1)-c(t,1)\right) \bJ(t) &=& 
\frac{\bJ(t)}{(1-M)}\frac{\D }{\D t} \bJ(t)+ \frac{\bJ(t)^2 }{(1-M)} \nonumber \\
& =& -\frac{\D }{\D t}  \frac{ \bJ(t)^2}{2(1-M)}  
+ \frac{2\bJ(t)}{(1-M)}\frac{\D }{\D t} \bJ(t)+ \frac{\bJ(t)^2}{(1-M)}  \, , 
\label{eq:entropy J}
\end{eqnarray} 
leading to
\begin{equation} 
- \frac{1}{(1-M)}\left(\frac{\D }{\D t} \bJ(t)\right)^2   
=(M-1)\left(c(t,-1)-c(t,1)\right)^2 +\frac{2\bJ(t)}{(1-M)}\frac{\D }{\D t} \bJ(t)+ \frac{\bJ(t)^2 }{(1-M)}
 \, .  \label{eq:dissipation J}
\end{equation}
Combining (\ref{eq:u0}) -- (\ref{eq:entropy J}) -- (\ref{eq:dissipation J}), we obtain
\[ \bD(t) = \int_{-1}^1 c(t,x) \Big(   \partial_x \log c(t,x) + x +c(t,-1)-c(t,1)\Big)^2\D  x + \frac{1}{(1-M)}\left(\frac{\D }{\D t} \bJ(t)\right)^2\, , \]
and the proof of Lemma \ref{lem:entropy u} is complete.
\end{proof}

\paragraph{A priori estimates} 
We now derive a priori bounds for solutions to \eqref{eq:1D} in the classical sense.

\begin{proposition}[Main a priori estimate]\label{apriori}
Assume that $\int _{-1}^1c_0 \log c_0  \D x < +\infty$. Let $c$ be a classical solution to (\ref{eq:1D}). If  $M<1$, then the following global estimates hold true for all $T>0$:
\begin{eqnarray*}
\sup_{t\in (0,T)}\int _{-1}^1c(t,x) \log c (t,x)  \D x &<& +\infty  \, ,\\
\int _{0}^T\int _{-1}^1 c(s,x)(\partial_x \log c (s,x))^2   \D x\D  s  &<& +\infty\, .
\end{eqnarray*}
\end{proposition}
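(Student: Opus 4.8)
The plan is to extract the a priori estimates directly from the monotonicity of the Lyapunov functional $\bL$ established in Lemma~\ref{lem:entropy u}. First I would integrate \eqref{eq:dissipation u_0} in time from $0$ to $T$, which gives
\[
\bL(T) + \int_0^T \bD(t)\,\D t = \bL(0) = \bH(c_0|G_M) + \frac{\bJ(0)^2}{2(1-M)}\, .
\]
The right-hand side is finite: the hypothesis $\int_{-1}^1 c_0\log c_0\,\D x < +\infty$ together with the boundedness of $x^2/2$ on $(-1,1)$ and mass conservation shows $\bH(c_0|G_M) < +\infty$, and $\bJ(0)$ is finite since $|\bJ(0)| \le M$. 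Since $1-M>0$, the term $\bJ(t)^2/(2(1-M))$ is nonnegative, so from the displayed identity we obtain both $\sup_{t\in(0,T)} \bH(c|G_M)(t) \le \bL(0) < +\infty$ and $\int_0^T \bD(t)\,\D t \le \bL(0) < +\infty$ (here one uses $\bD \ge 0$, which also follows from Lemma~\ref{lem:entropy u} since each term in \eqref{eq:dissipation u} is a square).

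The first estimate of the Proposition then follows by rewriting $\bH(c|G_M)$ in terms of $\int c\log c$: since $\log G_M(x) = \log M - \log\!\big(\int_{-1}^1 e^{-y^2/2}\D y\big) - x^2/2$, we have
\[
\int_{-1}^1 c(t,x)\log c(t,x)\,\D x = \bH(c|G_M)(t) + \int_{-1}^1 c(t,x)\log G_M(x)\,\D x\, ,
\]
and the last integral is bounded in absolute value by $M\big(|\log M| + \log\!\int_{-1}^1 e^{-y^2/2}\D y + \tfrac12\big)$ uniformly in $t$, using mass conservation and $0\le x^2 \le 1$. Hence $\sup_{t\in(0,T)}\int_{-1}^1 c\log c\,\D x < +\infty$.

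For the second estimate I would unpack the Fisher-information term in the dissipation. From \eqref{eq:dissipation u}, $\int_0^T \bI(c|\Gamma_c)\,\D t \le \int_0^T \bD(t)\,\D t < +\infty$, and
\[
\bI(c|\Gamma_c)(t) = \int_{-1}^1 c(t,x)\Big(\partial_x\log c(t,x) + x + c(t,-1)-c(t,1)\Big)^2\D x\, .
\]
Expanding the square and using $(a+b+d)^2 \ge \tfrac12 a^2 - 2(b+d)^2$, one gets
\[
\int_{-1}^1 c\,(\partial_x\log c)^2\,\D x \le 2\,\bI(c|\Gamma_c)(t) + 4\int_{-1}^1 c\,(x + c(t,-1)-c(t,1))^2\,\D x\, .
\]
The remaining term is controlled by $8M\big(1 + (c(t,-1)-c(t,1))^2\big)$, and $\int_0^T (c(t,-1)-c(t,1))^2\,\D t$ is finite because $(c(t,-1)-c(t,1))^2$ appears (up to the factor $(1-M)$ and the contribution of $\bJ$) in the second term of $\bD$: more precisely $\big((c(t,-1)-c(t,1))(1-M)-\bJ(t)\big)^2 \le \int_0^T \bD$, so $\int_0^T (c(t,-1)-c(t,1))^2\,\D t \le C\big(\int_0^T \bD\,\D t + \int_0^T \bJ(t)^2\,\D t\big)$, and $\bJ(t)^2$ is bounded by $M^2$. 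Integrating the displayed inequality in time then yields $\int_0^T\int_{-1}^1 c\,(\partial_x\log c)^2\,\D x\,\D t < +\infty$.

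The main obstacle I anticipate is not any single estimate but the justification that these manipulations are legitimate for a \emph{classical} solution on $(0,T)$ — i.e.\ that all the integrations by parts in the proof of Lemma~\ref{lem:entropy u} and the time-integration above are valid, and that boundary terms genuinely vanish thanks to the zero-flux conditions in \eqref{eq:1D}. Since the Proposition is stated for classical solutions this is largely a matter of invoking sufficient regularity, but care is needed near $t=0$ to ensure $\bL(0)$ is finite (handled by the hypothesis on $c_0\log c_0$) and that $c$ stays strictly positive in the interior so that $\log c$ and $\partial_x\log c$ make sense pointwise; the strong maximum principle for the (uniformly parabolic, with smooth-in-time drift coefficients) equation gives this on $(0,T)\times(-1,1)$, and a standard approximation argument transfers the finiteness at $t=0$.
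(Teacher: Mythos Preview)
Your argument is correct and shares its starting point with the paper (integrating the Lyapunov identity from Lemma~\ref{lem:entropy u}), but the extraction of the two bounds is organised differently. One small slip: from $\bL(T)+\int_0^T\bD=\bL(0)$ you cannot conclude $\int_0^T\bD\le\bL(0)$, since $\bL(T)$ may be negative; the correct bound is $\int_0^T\bD\le\bL(0)-M\log M$, using the Jensen lower bound $\bH(c|G_M)\ge M\log M$ noted in the paper. This does not affect your conclusions.

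The substantive difference is in how you obtain $\int_0^T(c(t,-1)-c(t,1))^2\,\D t<+\infty$. The paper derives the trace-type inequality
\[
(c(t,-1)-c(t,1))^2\le M\int_{-1}^1 c\,(\partial_x\log c)^2\,\D x
\]
and then, by expanding $\bI(c|\Gamma_c)$ and exploiting the ODE identities for $\bJ$, arrives at a lower bound of the form $(M+M^{-1}-2)(c(-1)-c(1))^2-2M$ for the dissipation; positivity of $M+M^{-1}-2$ when $M<1$ closes the estimate. You bypass this entirely: the second term of $\bD$ already gives $(1-M)^{-1}\big((c(-1)-c(1))(1-M)-\bJ\big)^2$ integrable in time, and since $|\bJ(t)|\le M$ uniformly, the triangle inequality in $L^2(0,T)$ delivers the $L^2_t$ bound on $c(-1)-c(1)$ directly. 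This is more elementary than the paper's route and avoids the trace inequality altogether. The paper's approach, on the other hand, yields the sharper structural inequality \eqref{ineq:estimate_1}, which makes the role of the critical mass $M=1$ more transparent (the constant $M+M^{-1}-2$ degenerates exactly there), and the trace inequality itself is reused later in the critical case.
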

\begin{proof}
The proof follows from Lemma \ref{lem:entropy u}. Indeed, integrating \eqref{eq:dissipation u_0} in time, it yields that
\begin{eqnarray}
 & &\bH(c|G_M) (t)+\frac{\bJ(t)^2}{2(1-M)} + \int_0^t \int_{-1}^1 c(s,x) \bigg(   \partial_x \log c(s,x)  + x  + c(s,-1)-c(s,1)\bigg)^2\D  x  \D s\nonumber \\
& & +\frac{1}{(1-M)}\int_0^t\left(\frac{\D }{\D t} \bJ(s)\right)^2 \D s = \bH(c|G_M)(0)+\frac{\bJ(0)^2}{2(1-M)} \, . \label{eq:estimate_1}
\end{eqnarray}
Since $\bH(c|G_M)\ge M\log M$, it remains to prove that $c(t,-1)-c(t,1)$ belongs to $L^2$ locally in time.  We first derive the following trace-type inequality: 
\begin{eqnarray}
 \left(c(t,-1)-c(t,1)\right)^2 &=& \left( \int_{-1}^1  \partial_x c(t,x) \D x \right)^2 \nonumber \\
\label{ineq:trace:0}
 & \le &\left(\int_{-1}^1 c(t,x) \D x \right) \left(\int_{-1}^1 c(t,x) \left( \partial_x \log c(t,x) \right)^2  \D x \right) \, .
\end{eqnarray}
Furthermore, recalling that
\begin{equation*}
2 (1-M)\left(c(t,-1)-c(t,1)\right) \bJ(t )= \frac{\D }{\D t} \bJ(t)^2+2 \bJ(t)^2 \, ,
\end{equation*}
and 
\begin{equation*}
\frac{\D }{\D t} \bJ(t) ^2+ \bJ(t)^2 +\left(\frac{\D }{\D t} \bJ(t)\right)^2 = (1-M)^2 \left( c(t,-1)-c(t,1)\right)^2  \ge 0\, ,
\end{equation*}
we deduce that 
\begin{eqnarray}
 & &2 \left(c(t,-1)-c(t,1)\right) \bJ(t ) +\frac{1}{(1-M)}\left(\frac{\D }{\D t} \bJ(t)\right)^2 \nonumber \\
 &=&  (1-M) \left( c(t,-1)-c(t,1)\right)^2  \ge 0\, ,\label{ineq:positivity}
\end{eqnarray}
which together with inequality (\ref{ineq:trace:0}) yields that
\begin{eqnarray}
& &\int_{-1}^1 c(t,x) \Big(   \partial_x \log c(t,x)  + x  + c(t,-1)-c(t,1)\Big)^2\D  x +\frac{1}{(1-M)}\left(\frac{\D }{\D t} \bJ(t)\right)^2
\nonumber \\ &= &
\int_{-1}^1 c(t,x) \left(  \partial_x \log c(t,x) \right)^2\D x + (M-2) \left(c(t,-1)-c(t,1)\right)^2  + 2 \left(c(t,-1)-c(t,1)\right) \bJ(t ) \nonumber \\
 & & + \int_{-1}^1  x^2 c(t,x)\D x - 2M +2\left(c(t,-1)+c(t,1) \right) +\frac{1}{(1-M)}\left(\frac{\D }{\D t} \bJ(t)\right)^2\nonumber \\
& \geq &  \left(M + \frac{1}{M}-2\right) \left(c(t,-1)-c(t,1)\right)^2   - 2M \, . \label{ineq:estimate_1}
\end{eqnarray}

The quantity $M + M^{-1} - 2$ is positive since $M<1$. Hence, using \eqref{eq:estimate_1} and \eqref{ineq:estimate_1} we can prove that $c(t,-1)-c(t,1)$ belongs to $L^2$ locally in time. Next, using again \eqref{ineq:estimate_1} togther with \eqref{ineq:positivity}, we see that
\begin{eqnarray}
& &\int_{-1}^1 c(t,x) \Big(   \partial_x \log c(t,x)  + x  + c(t,-1)-c(t,1)\Big)^2\D  x +\frac{1}{(1-M)}\left(\frac{\D }{\D t} \bJ(t)\right)^2
\nonumber \\ &\ge  &
\int_{-1}^1 c(t,x) \left(  \partial_x \log c(t,x) \right)^2\D x + (M-2) \left(c(t,-1)-c(t,1)\right)^2 -2M\, . \label{ineq:estimate_2}
\end{eqnarray}
Hence,  $\int_{-1}^1 c(t,x) \left(  \partial_x \log c(t,x) \right)^2\D x$ belongs to $L^1$ locally in time.
\end{proof}

\paragraph{Long-time behaviour}\label{sec:StSt}
To prove convergence of $c(t,\cdot)$ towards $G_M $ we develop the following strategy.  We use the previous {\em a priori} estimates which enable to pass to the limit after extraction of a converging subsequence. The main argument (apart from passing to the limit) consists in identifying the possible configurations $c_\infty$ for which the dissipation $\bD$ vanishes. This occurs if and only if both positive terms in \eqref{eq:dissipation u} are zero. Thanks to \eqref{eq:first_momentum}, this means that $\bJ_\infty = (1 - M) \left(c_\infty(-1)-c_\infty(1)\right)$ on the one hand, and on the other hand,
\[ \partial_x \log c_\infty(x)  + x + c_\infty(-1)-c_\infty(1) = 0  \, ,\]
which yields that $c_\infty \equiv G_M$. 

\paragraph{Rate of convergence} 
As it is classical when the equilibrium state is a gaussian function, the natural tool is a logarithmic Sobolev inequality established by Gross in \cite{gross} that we first recall. Although we are dealing here with a non linear problem this method will be fruitful.
\begin{lemma} [Logarithmic Sobolev inequality] \label{LSI(1)} 
Let $\nu(x)dx=\exp(-V(x))dx$ be a measure with smooth density on $[-1,1]$. Assume that $V''(x)\geq 1$ then, for $u\geq 0$ satisfying $\int_{-1}^1 u(x) \D x=\int_{-1}^1 \nu (x) \D x$, we have
\begin{equation*}
\int_{-1}^1 u(x)\log \left(\frac{u(x)}{\nu(x)}\right)\D x\leq \frac{1}{2}\int_{-1}^1 u(x)\left(\partial_x\left(\log\frac{u(x)}{\nu(x)}\right)\right)^2 \D x\, .
\end{equation*}
\end{lemma}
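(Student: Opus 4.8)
The plan is to recognize this as the classical Bakry--\'{E}mery criterion and to prove it by heat-flow interpolation. First I would put it in standard form. Since $V$ is smooth on the compact interval $[-1,1]$, the total mass $Z:=\int_{-1}^1 e^{-V(x)}\D x$ is finite and positive; let $\mu$ be the probability measure with density $\nu/Z$, so that $\nu\,\D x=Z\,\D\mu$, and set $h:=u/\nu$. The hypothesis $\int u=\int\nu$ gives $\int_{-1}^1 h\,\D\mu=1$, and $\partial_x\log h=h'/h$ gives $\int_{-1}^1 u\log(u/\nu)\D x=Z\int_{-1}^1 h\log h\,\D\mu$ together with $\int_{-1}^1 u\,(\partial_x\log(u/\nu))^2\D x=Z\int_{-1}^1\frac{(h')^2}{h}\,\D\mu$. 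Hence the assertion is equivalent to the logarithmic Sobolev inequality
\[
\int_{-1}^1 h\log h\,\D\mu\ \le\ \frac12\int_{-1}^1\frac{(h')^2}{h}\,\D\mu\,,\qquad\int_{-1}^1 h\,\D\mu=1\,.
\]
By a routine truncation-and-mollification argument, and since the inequality is vacuous when its right-hand side is infinite, it suffices to prove this for $h$ smooth on $[-1,1]$ with $\inf_{[-1,1]}h>0$.

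Next I would bring in the diffusion. Let $L:=\partial_{xx}-V'\partial_x$ on $[-1,1]$ with Neumann boundary conditions; then $L$ is self-adjoint and nonpositive on $L^2(\mu)$, with reversible measure $\mu$, and it generates a Markov semigroup $(P_t)_{t\ge0}$ that fixes constants, preserves the $\mu$-mass, and --- the interval being bounded, hence the spectral gap positive --- is ergodic: $P_th\to\int_{-1}^1 h\,\D\mu$ as $t\to\infty$. Its \emph{carr\'e du champ} is $\Gamma(\varphi)=(\varphi')^2$, integration by parts reads $\int_{-1}^1(L\varphi)\psi\,\D\mu=-\int_{-1}^1\varphi'\psi'\,\D\mu$ with no boundary term (Neumann), and a short computation gives the iterated operator $\Gamma_2(\varphi)=(\varphi'')^2+V''\,(\varphi')^2$. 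Since $V''\ge1$ this satisfies $\Gamma_2(\varphi)\ge\Gamma(\varphi)$ --- the curvature bound $CD(1,\infty)$, which is the only place the hypothesis is used.

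Finally I would run the Bakry--\'{E}mery interpolation. For $h$ as above set $\Lambda(t):=\int_{-1}^1(P_th)\log(P_th)\,\D\mu$ and $\Psi(t):=\int_{-1}^1\frac{\bigl((P_th)'\bigr)^2}{P_th}\,\D\mu$. Differentiating and integrating by parts yields $\Lambda'(t)=-\Psi(t)$, so, using $\Lambda(\infty)=\int_{-1}^1 1\cdot\log1\,\D\mu=0$ by ergodicity, $\int_{-1}^1 h\log h\,\D\mu=\Lambda(0)=\int_0^\infty\Psi(t)\,\D t$. The core of the argument is then the differential inequality $\Psi'(t)\le-2\Psi(t)$: differentiating $\Psi$, substituting $\partial_tP_th=LP_th$, integrating by parts and repeatedly using the diffusion chain rule $L\Phi(g)=\Phi'(g)Lg+\Phi''(g)\Gamma(g)$, one is led (writing $g=P_th$) to
\[
\Psi'(t)=-2\int_{-1}^1\Bigl(g\,\bigl((\log g)''\bigr)^2+V''\,\frac{(g')^2}{g}\Bigr)\D\mu\ \le\ -2\int_{-1}^1\frac{(g')^2}{g}\,\D\mu=-2\Psi(t)\,,
\]
using $V''\ge1$ and discarding the first, nonnegative, term. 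Then $\Psi(t)\le e^{-2t}\Psi(0)$ gives $\int_{-1}^1 h\log h\,\D\mu\le\frac12\Psi(0)=\frac12\int_{-1}^1\frac{(h')^2}{h}\,\D\mu$, which unwinds to the claim. The hard part is the $\Gamma_2$ bookkeeping behind the displayed identity for $\Psi'$ --- standard since Gross \cite{gross} and Bakry--\'{E}mery --- while the remaining points (smoothness of $P_th$ up to the endpoints so that the integrations by parts are legitimate, convergence of $\Lambda$, and the approximation step of the first paragraph) are routine.
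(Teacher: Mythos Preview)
Your argument is correct: the reduction to a probability-measure form, the Neumann diffusion $L=\partial_{xx}-V'\partial_x$, the curvature bound $\Gamma_2\ge\Gamma$ from $V''\ge1$, and the entropy--Fisher interpolation $\Lambda'=-\Psi$, $\Psi'\le-2\Psi$ are all sound. The one point worth making explicit is why no boundary contributions appear in the $\Psi'$ computation: since $P_th$ satisfies Neumann and stays bounded below, $(\log P_th)'=0$ at $\pm1$, so the terms produced by integrating $L$ against functions of $f=\log g$ and $f'$ all vanish at the endpoints. With that check, the displayed identity for $\Psi'(t)$ is justified.

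As for comparison with the paper: there is nothing to compare. The paper does not prove Lemma~\ref{LSI(1)} at all --- it merely \emph{recalls} it as a classical tool, attributing it to Gross~\cite{gross}. Your Bakry--\'Emery semigroup proof is therefore not an alternative route but the only proof on the table; it is in fact the standard modern argument for log-Sobolev under a uniform convexity bound on $V$, and it is more directly adapted to the hypothesis $V''\ge1$ than Gross's original Gaussian argument would be.
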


First, recalling \eqref{eq:dissipation u_0} and \eqref{eq:dissipation u}, we deduce that 
\begin{equation*}
\frac{\D }{\D t} \bL(t)=  -\bD(t) = -\bI(c|\Gamma_c) - \frac{1}{(1-M)}\bigg(\Big(c(t,-1)-c(t,1)\Big)(1-M)-\bJ(t)\bigg)^2 \, .
\end{equation*}
Our aim is to apply a logarithmic Sobolev inequality to $\bH(c|\Gamma_c)$. We first observe that 
\begin{eqnarray*}
\frac{\D }{\D t} \bL(t)+2\bL(t)
&=& -\bI(c|\Gamma_{c})- (1-M)\left(c(t,-1)-c(t,1)\right)^2+2\bH(c|G_{M}) \\ 
& &+ 2\left(c(t,-1)-c(t,1)\right) \bJ(t)\,,  
\end{eqnarray*}
and we decompose the relative entropy as follows
\begin{eqnarray*}
\bH(c|G_M)&=&\int_{-1}^1 c(t,x) \log \left(\frac{c(t,x)}{G_M(x)} \right)\D x\\
&=&\int_{-1}^1 c(t,x) \log \left(\frac{c(t,x) }{\Gamma_c(x)}\right)\D x +\int_{-1}^1 c(t,x) \log \left(\frac{\Gamma_c(x)}{G_M(x)}\right) \D x\, .
\end{eqnarray*}
Recalling the definition \eqref{Lyapounov_ss_crit_1}  of $\Gamma_c$ we deduce that
\begin{eqnarray*}
& &\int_{-1}^1 c(t,x) \log \left(\frac{\Gamma_c(x)}{G_M (x)}\right) \D x= \int_{-1}^1  c(t,x) \log \left(\frac{\int _{-1}^1 \exp\left(-y^2/2\right) \D y}{M}\right) \D x\\
& &+\int_{-1}^1  c(t,x)  \log \left(A_c\exp\Big(-\left(c(t,-1)-c(t,1) \right) \, x\Big)\right) \D x\\
& = & M
\log\left(\frac{ \int _{-1}^1 \exp\left(-y^2/2\right) \D y}{\int_{-1}^1\exp \left( - \left( c(t,-1)-c(t,1)\right)  y -\frac{y^2}{2}\right)\D y}\right)-\left(c(t,-1)-c(t,1) \right) \bJ(t)\, ,
\end{eqnarray*}
thanks to the definition \eqref{cte_Lyapounov_ss_crit_1} of $A_c$. Therefore, 
\begin{eqnarray}
2\bH(c|G_M)&=&2\bH(c|\Gamma_{c})+2M\log\left(\frac{ \int _{-1}^1 \exp\left(-x^2/2\right) \D x}{\int_{-1}^1\exp \left( - \left( c(t,-1)-c(t,1)\right)  x -\frac{x^2}{2}\right)\D x}\right)\nonumber \\
& & -2\left(c(t,-1)-c(t,1) \right) \bJ(t)\, .
\label{eq:2entropies}
\end{eqnarray}
Finally, applying a logarithmic Sobolev inequality (Lemma \ref{LSI(1)}) to the measure $\Gamma_{c} (x)\D x$ and using that  $- (1-M)\left(c(t,-1)-c(t,1)\right)^2\le 0$, we obtain 
\begin{eqnarray*}
\frac{\D }{\D t} \bL(t)+2\bL(t)
&=& -\bI(c|\Gamma_{c})- (1-M)\left(c(t,-1)-c(t,1)\right)^2+2\bH(c|\Gamma_{c})  \\
& &+ 2M\log\left(\frac{ \int _{-1}^1 \exp\left(-x^2/2\right) \D x}{\int_{-1}^1\exp \left( - \left( c(t,-1)-c(t,1)\right)  x -\frac{x^2}{2}\right)\D x}\right)\\
&\le & 2M\log\left(\frac{ \int _{-1}^1 \exp\left(-x^2/2\right) \D x}{\int_{-1}^1\exp \left( - \left( c(t,-1)-c(t,1)\right)  x -\frac{x^2}{2}\right)\D x}\right)\\
&=& -2M\log \left(\int_{-1}^1  \exp\left(-\left( c(t,-1)-c(t,1)\right)  x \right)\frac{G_M (x)}{M} \D x\right)\\
& \le & 0\, ,
\end{eqnarray*}
thanks to Jensen's inequality. Hence
\begin{equation*}
\dfrac{\D}{\D t} \bL(t)+2\bL(t)\leq 0\, ,
\end{equation*}
leading to 
\begin{equation*}
\bH(c|G_M)  \leq  \bL(0)\exp(-2t)\, .
\end{equation*}
A rate of convergence for the $L^1$ norm is obtained by using the  Csisz\'ar-Kullback  inequality, \cite{Csiszar}, that we recall now.
\begin{proposition} [Csisz\'ar-Kullback  inequality]
For any non-negative functions  $f,g\in L^1(-1,1) $ such that $\int _{-1}^1  f(x) \D x=\int  _{1}^1  g(x) \D x=M$, we have that 
\begin{equation}\label{CK}
\|f-g\|_1^2\leq 2 M \int_{-1}^1 f(x)\log \left(\frac{f(x)}{g(x)}\right)\D x\, .
\end{equation}
\end{proposition}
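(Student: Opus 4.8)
The plan is to prove the Csisz\'ar--Kullback inequality \eqref{CK} by first reducing to the case of probability densities, and then combining an elementary one-variable convexity estimate with the Cauchy--Schwarz inequality. Throughout one may assume the right-hand side is finite (otherwise there is nothing to prove), so that in particular $f = 0$ a.e. on $\{g = 0\}$ and the ratio $t := f/g$ is well defined on $\{g>0\}$; all integrals below are then understood on that set.

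First I would normalize. Setting $\tilde f = f/M$ and $\tilde g = g/M$, both sides of \eqref{CK} are homogeneous of degree two: the left-hand side scales by $M^2$, and $\int_{-1}^1 f \log(f/g)\,\D x = M \int_{-1}^1 \tilde f \log(\tilde f/\tilde g)\,\D x$, so dividing by $M^2$ it suffices to prove
\[
\left( \int_{-1}^1 |\tilde f - \tilde g|\,\D x \right)^2 \le 2 \int_{-1}^1 \tilde f \log\!\left(\frac{\tilde f}{\tilde g}\right) \D x
\]
when $\int_{-1}^1 \tilde f\,\D x = \int_{-1}^1 \tilde g\,\D x = 1$. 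Writing $t = \tilde f/\tilde g$ and $\phi(t) = t\log t - t + 1 \ge 0$, we have $\tilde f \log(\tilde f/\tilde g) = \tilde g\, \phi(t)$ and $|\tilde f - \tilde g| = \tilde g\, |t-1|$ pointwise.

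The key elementary ingredient is the pointwise bound $3(t-1)^2 \le (2t+4)\,\phi(t)$ for all $t \ge 0$. To establish it, set $h(t) = (2t+4)\phi(t) - 3(t-1)^2$; then $h(1) = h'(1) = 0$ and a direct computation gives $h''(t) = 4\bigl(\log t + 1/t - 1\bigr) \ge 0$, the last inequality being the standard one, with equality only at $t=1$. Hence $h$ is convex with a double zero at $t=1$, so $h \ge 0$ on $(0,\infty)$, and the bound extends to $t=0$ by continuity.

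Finally I would apply Cauchy--Schwarz with the weight $2t+4$:
\[
\left( \int_{-1}^1 \tilde g\, |t-1|\,\D x \right)^2 \le \left( \int_{-1}^1 \tilde g\,(2t+4)\,\D x \right) \left( \int_{-1}^1 \frac{\tilde g\,(t-1)^2}{2t+4}\,\D x \right).
\]
The first factor equals $2\int_{-1}^1 \tilde f\,\D x + 4\int_{-1}^1 \tilde g\,\D x = 6$, and by the pointwise bound the second factor is at most $\tfrac13 \int_{-1}^1 \tilde g\,\phi(t)\,\D x = \tfrac13 \int_{-1}^1 \tilde f \log(\tilde f/\tilde g)\,\D x$. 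Multiplying gives $\|\tilde f - \tilde g\|_1^2 \le 2\int_{-1}^1 \tilde f \log(\tilde f/\tilde g)\,\D x$, and undoing the normalization yields \eqref{CK}. The only mildly delicate step is the pointwise inequality; once that is in place the rest is routine bookkeeping, and the Csisz\'ar--Kullback inequality then converts the exponential decay of $\bH(c|G_M)$ obtained above into an exponential decay of $\|c(t,\cdot) - G_M\|_{L^1}$.
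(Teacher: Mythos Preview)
The paper does not actually prove this proposition: it merely states the Csisz\'ar--Kullback inequality and cites \cite{Csiszar}, then immediately applies it. So there is no ``paper's own proof'' to compare against.

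Your argument is correct. The normalization to probability densities is fine (both sides are homogeneous of degree two in $M$), the pointwise inequality $3(t-1)^2 \le (2t+4)\phi(t)$ with $\phi(t)=t\log t - t + 1$ is verified exactly as you say (the second derivative $h''(t)=4(\log t + 1/t - 1)$ is nonnegative with a double zero at $t=1$), and the Cauchy--Schwarz step with weight $2t+4$ closes the argument cleanly since $\int \tilde g(2t+4)\,\D x = 2+4 = 6$ and $\int \tilde g\,\phi(t)\,\D x = \int \tilde f\log(\tilde f/\tilde g)\,\D x$ by the mass normalization. This is a standard sharp-constant proof of the inequality, and your handling of the case $g=0$ on a set of positive measure (forcing $f=0$ there when the entropy is finite) is the right way to make the ratio $t=f/g$ well defined.
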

Hence, the following decay estimate holds:
\begin{eqnarray*}
\|c(t,x)- G_M(x)\|_{L^1} & \leq & \sqrt{2 M \bL(0)}\exp(-t)\, .
\end{eqnarray*}

\subsection{Critical case $M=1$}\label{sec:M=1}
Similarly, we can prove the existence of a stationary solution.
\begin{lemma}\label{lemma:EScritique}
For $M=1$, equation \eqref{eq:1D} admits a unique stationary solution given by $G_1$.
\end{lemma}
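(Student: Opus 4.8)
The plan is to mimic the proof of Lemma~\ref{lemma:ESsscritique}, noting that in the critical case $M=1$ the function $P$ must be analysed more carefully since the strict inequality $P(\alpha)>1$ used there becomes an equality in the limit $\alpha\to 0$. First I would observe that, exactly as in the subcritical case, any stationary state of \eqref{eq:1D} is either the symmetric Gaussian $G_1(x)=\exp(-x^2/2)/\int_{-1}^1\exp(-y^2/2)\D y$, or of the form $G_\alpha$ given by \eqref{eq:stat state rescaled} for some $\alpha\neq 0$ satisfying the self-consistency relation $\alpha=G_\alpha(-1)-G_\alpha(1)$ together with the mass constraint $\int_{-1}^1 G_\alpha(x)\D x=M=1$, i.e. $P(\alpha)=1$ for $\alpha>0$ (and the analogous statement for $\alpha<0$).

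Next I would show that $P(\alpha)>1$ strictly for every $\alpha>0$, which rules out all non-symmetric candidates. This is precisely the inequality already established inside the proof of Lemma~\ref{lemma:ESsscritique}:
\begin{equation*}
P(\alpha)=\int_0^{2\alpha}\frac{1}{1-\exp(-2\alpha)}\exp\!\left(-y-\frac12\Big(\big(\tfrac{y}{\alpha}-1\big)^2-1\Big)\right)\D y>\int_0^{2\alpha}\frac{\exp(-y)}{1-\exp(-2\alpha)}\D y=1\,,
\end{equation*}
where the strict inequality holds because the extra factor $\exp\!\big(-\tfrac12((y/\alpha-1)^2-1)\big)$ is $>1$ on a subset of $(0,2\alpha)$ of positive measure (it equals $1$ only at the endpoints $y=0$ and $y=2\alpha$ and exceeds $1$ for $y\in(0,2\alpha)$ since then $(y/\alpha-1)^2<1$). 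Hence the equation $P(\alpha)=1$ has no solution with $\alpha>0$; treating $\alpha<0$ by the symmetry $x\mapsto -x$ (or directly by the analogous computation) excludes negative $\alpha$ as well. Therefore the only stationary state is $G_1$, and since $\int_{-1}^1 G_1(x)\D x=1=M$ it indeed satisfies the mass constraint, giving existence and uniqueness.

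I do not expect a serious obstacle here: the entire argument is a transcription of the subcritical computation, the only subtlety being to make sure the inequality $P(\alpha)>1$ is strict for all $\alpha>0$ (so that equality $P(\alpha)=1$ is genuinely impossible) rather than merely asymptotic as $\alpha\to 0^+$. If one wanted to be fully rigorous about the endpoint behaviour one could also remark that $P(\alpha)\to 1$ as $\alpha\to 0^+$ and $P$ is continuous and stays above $1$, so the value $1$ is approached but never attained; this confirms that the symmetric Gaussian $G_1$ is the unique stationary solution in the critical case.
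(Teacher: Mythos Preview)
Your proposal is correct and follows exactly the route the paper indicates: the paper's own proof simply states that it is similar to that of Lemma~\ref{lemma:ESsscritique}, and you have spelled out precisely that argument, including the observation that the strict inequality $P(\alpha)>1$ for every $\alpha>0$ (already implicit in the subcritical proof) rules out asymmetric stationary states even when $M=1$.
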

\begin{proof}
The proof is similar to the one of lemma \ref{lemma:ESsscritique}.
\end{proof}

In this part we cannot follow the strategy developped in Section \ref{sec:M<1} since we crucially used $M<1$. In the case $M=1$, from \eqref{eq:u0} it follows that 
\begin{equation}\label{eq:dissipation_critique}
\frac{\D }{\D t} \bH(c|G_1)=  -\bI(c|\Gamma_c) + \left(c(t,-1)-c(t,1)\right) \bJ(t)  \, ,
\end{equation}
and
\begin{eqnarray*}
\bH(c|G_1)&=&\bH(c|\Gamma_{c})+\log\left(\frac{ \int _{-1}^1 \exp\left(-x^2/2\right) \D x}{\int_{-1}^1\exp \left( - \left( c(t,-1)-c(t,1)\right)  x -\frac{x^2}{2}\right)\D x}\right)\\
& & -\left(c(t,-1)-c(t,1) \right) \bJ(t)\, .
\end{eqnarray*}
Then, together with the logarithmic Sobolev inequality $2\bH(c|\Gamma_c) \leq \bI(c|\Gamma_c)$, we deduce that
\begin{equation*}
\frac{\D }{\D t} \bH(c|G_1)\le   -\bH(c|\Gamma_c) -\bH(c|G_1)+\log\left(\frac{ \int _{-1}^1 \exp\left(-x^2/2\right) \D x}{\int_{-1}^1\exp \left( - \left( c(t,-1)-c(t,1)\right)  x -\frac{x^2}{2}\right)\D x}\right) \, .
\end{equation*}
Consequently, using again Jensen's inequality, it follows that 
\begin{equation}\label{ineq_critique_entropie}
\frac{\D }{\D t} \bH(c|G_1)\le  -\bH(c|G_1)\le 0 \, .
\end{equation}
Consequently we deduce that $0\le \bH(c|G_1)(t)\le \bH(c|G_1)(0)$, hence 
\[\int_{-1}^1 c(t,x) \log c(t,x) \D x \leq C_0 \, , \quad {\rm a.e.}\; t\in (0,+\infty)\, .\]

\paragraph{A priori bound}
To obtain a control on the dissipation of entropy, we follow the strategy developed in \cite{CalvezMeunier_Siam}. Consider the even function $\Lambda : \R \rightarrow\R_+$ such that $\Lambda(0) = 0$, and $\Lambda'(u) = (\log(u))_+^{1/2}$ for $u>0$. Then, it is non-increasing on $(-\infty,0)$, non-decreasing on $(0,+\infty)$, convex and superlinear, and for all $D$, there exists $A\in \R_+$ such that for any $u \in (-\infty,-A)\cup (A,+\infty)$, $\Lambda(u)^2 \geq (1+D) C_0 u^2$. Using again a trace-type inequality, we get 

\begin{eqnarray}
 \Lambda\left(c(t,-1)-c(t,1)\right)^2 &=& \left( -\int_{-1}^1  \partial_x  \Lambda(c(t,x)-c(t,1)) \D x \right)^2 \, , \nonumber\\
 & = & \left( -\int_{-1}^1  \Lambda'(c(t,x)-c(t,1)) c(t,x) \partial_x \log(c(t,x)) \D x \right)^2 \, , \nonumber\\
 &\le & \left( \int_{-1}^1 c(t,x) \left|\Lambda'(c(t,x)-c(t,1))\right|^2 \D x \right)\left( \int_{-1}^1 c(t,x) (\partial_x \log(c(t,x)))^2  \D x \right)\, \nonumber\\
 & \le & \left( \int_{-1}^1 c(t,x) \log(c(t,x))_+
\D x \right)\left( \int_{-1}^1 c(t,x) (\partial_x \log(c(t,x)))^2  \D x \right)\, \nonumber\\
&\le & C_0 \left( \int_{-1}^1 c(t,x) (\partial_x \log(c(t,x)))^2  \D x \right)\, . \label{ineq:lambda}
\end{eqnarray}

Moreover, recall that we have

\begin{eqnarray}
\frac{\D \bH(c|G_1)(t)}{\D t } &=&\alpha(t) \bJ(t) - \int_{-1}^1 c(t,x) \left(\partial_x \log(c(t,x)) + \alpha(t) + x \right)^2 \D x\, , \label{eq:entropie_critique}\\
&= &\alpha(t)^2  - \alpha(t) \bJ(t)  +2 (1 - c(t,1) - c(t,-1)) - \bK(t) - \int_{-1}^1 c(t,x) \left(\partial_x \log(c(t,x))\right)^2 \D x\, , \nonumber \\
&\leq &  \alpha(t)^2  - \alpha(t) \bJ(t)  +2  - \int_{-1}^1 c(t,x) \left(\partial_x \log(c(t,x))\right)^2 \D x\, , \nonumber
\end{eqnarray}
with $\bK(t) = \int_{-1}^1 x^2 c(t,x) \D x \geq 0$.
Combining \eqref{ineq_critique_entropie} and \eqref{ineq:lambda}, it leads to 

\begin{displaymath}
\frac{\D \bH(c|G_1)(t)}{\D t }\leq \left\lbrace 
\begin{array}{ll}
0 & \text{ if } |\alpha(t)| \leq A\,\\
-\frac{ \Lambda(\alpha(t))^2}{C_0} + \alpha(t)^2 +2 - \alpha(t)\bJ(t)\leq 
-D\alpha(t)^2 + 2-\alpha(t)\bJ(t)  &\text{ if } |\alpha(t)| > A\,.
\end{array}\right.
\end{displaymath}

We can assume $A>\sqrt{2}$, so that $\alpha(t)^2>2$. In the case $|\alpha(t)| > A$, as $\bJ(t) = \bJ(0) e^{-t}$, we have $ - \alpha(t) \bJ(t) \leq \alpha(t)^2 |\bJ(0)|$, and 
\begin{displaymath}
-D\alpha(t)^2 + 2-\alpha(t)\bJ(t) \leq -(D-1-|\bJ(0)|)\alpha(t)^2 \, .
\end{displaymath}

Take $D = 2+ |\bJ(0)|$. Then, 
\begin{displaymath}
\frac{\D \bH(c|G_1)(t)}{\D t }\leq \left\lbrace 
\begin{array}{ll}
0 & \text{ if } |\alpha(t)| \leq A\,\\
-\alpha(t)^2  &\text{ if } |\alpha(t)| > A\,.
\end{array}\right.
\end{displaymath}

Now, on the set  $E = \{ t:|\alpha(t)| >A \}$, we have 
\begin{equation}\label{ineq:alpha_critique}
\int_E \alpha(t)^2 \D t \leq \bH(c|G_1)(0)\,,
\end{equation}
giving a $L^2$ control on $\alpha(t)$. 

Finally, using \eqref{eq:entropie_critique} and \eqref{ineq:alpha_critique}, we prove that $\int_{0}^t\int_{-1}^1 c(s,x) \left(\partial_x \log(c(s,x))\right)^2 \D x \D t$ is bounded for all $t\in (0,T)$. Then, proposition \ref{apriori} is still verified, and we can similarly prove the global existence of weak solutions in the critical case. 

\paragraph{Long-time behaviour}
The convergence of $c(t,\cdot)$ towards $G_M $ is proved as in the subcritical case. 

\paragraph{Rate of convergence}
By \eqref{ineq_critique_entropie}, we know that 
\begin{equation*}
\bH(c|G_1)(t) \le \bH(c|G_1)(0) e^{-t}\, .
\end{equation*}
Hence, the Csisz\'ar-Kullback  inequality leads to the following estimation:
\begin{eqnarray*}
\|c(t,x)- G_1(x)\|_{L^1} & \leq & \sqrt{2 M \bH(c|G_1)(0)}\exp(-t/2)\, .
\end{eqnarray*}

\subsection{The Super-critical case $M>1$}\label{sec:M>1}

We prove now that depending on the mass, the problem \eqref{eq:1D} admits one or three stationary solutions. Then, we prove that if the initial condition is asymmetric enough, a blow-up occurs in finite time, and we are able to give a quantitative criterion for its appearance. The proof is based on the blow-up analysis for a modified problem, followed by the use of a concentration-comparison principle to deduce the blow-up in our case. Numerical simulations displayed in Figure \ref{simus:cas_direct} show the appearance of a blow-up.


\subsubsection{Stationary solutions}
We establish now the following lemma.

\begin{lemma}\label{lemma:ESsurcritique}
Denote $M_0 =\frac{1}{2} \int_{-1}^1 exp\left(-\frac{x^2 -1}{2}\right) \D x >1$. Then, 
\begin{itemize}
\item[$\bullet$] for $M\in (1,M_0)$, equation \eqref{eq:1D} admits exactly three stationary states: the symmetric solution $G_M$, and two asymmetric solutions $G_{\pm \alpha}$, with
\begin{equation*} 
G_{\alpha}(x) =  \frac{\alpha }{1-\exp(-2\alpha)} \exp\Big(-\alpha (x+1) - \frac{x^2-1}{2}\Big)\, ,
\end{equation*}  
 for some $\alpha >0$ defined by the mass constraint $\int_{-1}^1 G_{\alpha}(x)\D x = M$.
 \item[$\bullet$] for $M\geq M_0$, $G_M$ is the only stationary solution.
\end{itemize}
\end{lemma}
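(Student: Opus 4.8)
The plan is to reduce the entire statement to the strict monotonicity of one scalar function. By the computation in the proof of Lemma~\ref{lemma:ESsscritique}, every stationary state of \eqref{eq:1D} is either the symmetric profile $G_M$ or of the form $G_\alpha(x)=A_\alpha e^{-\alpha x-x^2/2}$ with $\alpha=G_\alpha(-1)-G_\alpha(1)\neq0$; this self-consistency relation forces $A_\alpha=\alpha/(2e^{-1/2}\sinh\alpha)$, so the only remaining requirement is the mass constraint $P(\alpha)=M$, with
\[
P(\alpha):=\int_{-1}^1 G_\alpha(x)\,\dd x=\frac{\alpha\,Z(\alpha)}{2e^{-1/2}\sinh\alpha},\qquad Z(\alpha):=\int_{-1}^1 e^{-\alpha x-x^2/2}\,\dd x .
\]
Since $P$ is even, asymmetric stationary states occur in reflected pairs $G_{\pm\alpha}$, so it suffices to count the roots $\alpha>0$ of $P(\alpha)=M$. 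Integrating $\tfrac{\dd}{\dd x}e^{-\alpha x-x^2/2}=-(\alpha+x)e^{-\alpha x-x^2/2}$ over $(-1,1)$ gives the identity $Z(\alpha)\,m(\alpha)=2e^{-1/2}\sinh\alpha$, where $m(\alpha):=\alpha+\langle x\rangle_\alpha$ and $\langle\,\cdot\,\rangle_\alpha$ denotes the average against the probability density $Z(\alpha)^{-1}e^{-\alpha x-x^2/2}$ on $(-1,1)$; hence $P(\alpha)=\alpha/m(\alpha)$, with $m$ smooth and positive on $(0,\infty)$.

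Next I would compute the two endpoint values. One has $m(0)=\langle x\rangle_0=0$ by symmetry, and $m'(\alpha)=1-\operatorname{Var}_\alpha(x)$, so that $m'(0)=1-\langle x^2\rangle_0=2e^{-1/2}/\!\int_{-1}^1 e^{-x^2/2}\dd x$ (using $\tfrac{\dd}{\dd x}(xe^{-x^2/2})=(1-x^2)e^{-x^2/2}$); by l'Hôpital, $\lim_{\alpha\to0^+}P(\alpha)=1/m'(0)=\tfrac{e^{1/2}}{2}\int_{-1}^1 e^{-x^2/2}\dd x=M_0$, which also shows $M_0>1$ since $\langle x^2\rangle_0<1$. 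On the other hand $P(\alpha)>1$ for all $\alpha>0$ (equivalently $\langle x\rangle_\alpha<0$, because $\langle x\rangle_\alpha=-\int_0^\alpha\operatorname{Var}_s(x)\,\dd s$), and a Laplace asymptotic $Z(\alpha)\sim\alpha^{-1}e^{\alpha-1/2}$ yields $m(\alpha)\sim\alpha$, hence $\lim_{\alpha\to+\infty}P(\alpha)=1$.

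The core of the argument, and the step I expect to be the main obstacle, is to show that $P$ is \emph{strictly} decreasing on $(0,\infty)$ — the endpoint values alone only give at least one asymmetric state for $M\in(1,M_0)$. Since $m(0)=0$, $P=\alpha/m(\alpha)$ is strictly decreasing iff $\alpha\mapsto m(\alpha)/\alpha$ is strictly increasing, which follows from strict convexity of $m$ on $(0,\infty)$; and $m''(\alpha)=-\tfrac{\dd}{\dd\alpha}\operatorname{Var}_\alpha(x)=\kappa_3(\alpha)$, the third central moment of $Z(\alpha)^{-1}e^{-\alpha x-x^2/2}$. So everything reduces to the skewness inequality $\kappa_3(\alpha)>0$ for $\alpha>0$. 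I would prove this directly: centering at $\mu:=\langle x\rangle_\alpha\in(-1,0)$ and setting $w(Y):=e^{-bY-Y^2/2}$ with $b:=m(\alpha)>0$, $\ell:=-1-\mu$, $r:=1-\mu$, one has $\kappa_3(\alpha)\propto\int_\ell^r Y^3 w(Y)\,\dd Y$ together with $\int_\ell^r Yw(Y)\,\dd Y=0$. Splitting at $0$, the latter gives $\int_0^r Yw(Y)\,\dd Y=\int_0^{-\ell}Yw(-Y)\,\dd Y=:A>0$, and then $\kappa_3(\alpha)$ is a positive multiple of $\int_0^r Y^2\,\dd\nu_1-\int_0^{-\ell}Y^2\,\dd\nu_2$, where $\nu_1\propto Yw(Y)\,\dd Y$ on $[0,r]$ and $\nu_2\propto Yw(-Y)\,\dd Y$ on $[0,-\ell]$ are probability measures. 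Because $r>-\ell$ (as $r+\ell=-2\mu>0$) and $w(-Y)-w(Y)=2e^{-Y^2/2}\sinh(bY)>0$ for $Y>0$, a direct comparison of tail integrals shows $\nu_1$ stochastically dominates $\nu_2$, strictly on $(0,-\ell)$; hence $\int Y^2\,\dd\nu_1>\int Y^2\,\dd\nu_2$ and $\kappa_3(\alpha)>0$.

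With strict monotonicity in hand, $P:(0,\infty)\to(1,M_0)$ is a continuous strictly decreasing bijection. Therefore, for $M\in(1,M_0)$ there is exactly one $\alpha>0$ with $P(\alpha)=M$, producing exactly the three stationary states $G_M$, $G_\alpha$, $G_{-\alpha}$; while for $M\ge M_0$ one has $P(\alpha)<M_0\le M$ for every $\alpha\neq0$, so $G_M$ is the only stationary state.
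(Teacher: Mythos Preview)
Your argument is correct and, notably, far more complete than the paper's. The paper's own proof explicitly states that the authors ``were not able to characterize $I$ explicitly'' and instead rely on a numerical plot of $M_\alpha$ against $\alpha$ (Figure~\ref{simu:Alpha_M}) to assert that the range of $P$ is $(1,M_0)$ and that each value is hit exactly once. In other words, the paper establishes the reduction to the mass constraint $P(\alpha)=M$ and the identity $\lim_{\alpha\to0}G_\alpha=G_{M_0}$, but the crucial monotonicity of $\alpha\mapsto P(\alpha)$ is taken from numerics.

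Your approach genuinely fills this gap. The key steps --- rewriting $P(\alpha)=\alpha/m(\alpha)$ via the exponential-family identity $Z(\alpha)m(\alpha)=2e^{-1/2}\sinh\alpha$, then reducing strict monotonicity of $P$ to strict convexity of $m$, and finally to positivity of the third central moment $\kappa_3(\alpha)$ --- are all sound. The skewness argument is the nontrivial core: your stochastic-dominance comparison of the two probability measures $\nu_1,\nu_2$ works because (i) both have the same total mass $A$ by the centering condition, (ii) $w(-Y)>w(Y)$ for $Y>0$ gives $\int_0^t Yw(Y)\,\dd Y<\int_0^t Yw(-Y)\,\dd Y$, which is exactly $\nu_1([0,t])<\nu_2([0,t])$ on $(0,-\ell)$, and (iii) the support of $\nu_1$ strictly extends beyond that of $\nu_2$ since $r>-\ell$. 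The endpoint computations ($P\to M_0$ via l'H\^opital, $P\to1$ via Laplace asymptotics, and $P>1$ from $\langle x\rangle_\alpha<0$) are routine and correct. What your route buys is an honest proof where the paper offers only numerical evidence.
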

\begin{proof}
We have seen in the proof of lemma \ref{lemma:ESsscritique} that the only possible stationary solutions are of the form $G_M$ and $G_\alpha$. It is straightforward that for all $M>1$, $G_M$ is solution and satisfies the mass constraint.
 For $G_\alpha$, let us denote $M_\alpha = \int_{-1}^1 G_\alpha(x) \D x$. We know that $M_\alpha >1$. It remains to characterize the set $I = \{ M_\alpha,\, \alpha >0\}$ of attainable mass values. As we were not able to characterize $I$ explicitly, we performed a numerical simulation of $M_\alpha$ as a function of $\alpha$ (see figure \ref{simu:Alpha_M}, a)). Notice that $M_0$ is defined such that $\lim_{\alpha \rightarrow 0} G_{\alpha} = G_{M_0}$, leading to the result. 
\end{proof}

\subsubsection{Blow-up for a modified equation}

In this section we consider the following modified equation of \eqref{eq:1D}:
\begin{equation}\label{eq:1D_modified}
\partial _t c(t,x) =  \partial_x\Big( \partial _{x} c(t,x)+ \left(-1 + c(t,-1)-c(t,1)\right) c(t,x)\Big) \, ,\\
\end{equation}
where $c(t,x)$ is defined for $t>0$ and $x\in (-1,1)$, together with zero-flux boundary conditions:
\begin{equation}\label{eq:1D_modified_bord}
\begin{cases}
\partial _x c(t,-1)+\left(c(t,-1)-c(t,1) -1\right) c(t,-1) =0 \, , \\
\partial _x c(t,1)+ \left(c(t,-1)-c(t,1)-1\right) c(t,1)   =0  \, ,
\end{cases}
\end{equation}
and an initial condition: $c(t=0,x)=c_0(x)$. The total mass will also be denoted by $M$, i.e. $M=\int_{-1}^1 c(t,x) \D x$.
 
In this part we will prove that solutions to \eqref{eq:1D_modified} - \eqref{eq:1D_modified_bord} blow-up in finite time when mass is super-critical $M > 1$ and $c_0$ is decreasing and satisfies $c_0(-1)-c_0(1)>1$. To do so we prove that the first momentum shifted in $x=-1$ of $c$, $\tilde{\bJ}(t) = \int_{-1}^1   (x+1) c(t,x)\D  x$ cannot remain positive for all time. This technique was first used by Nagai \cite{Nagai}, then by many authors in various contexts.
In a first step, in Lemma \ref{lem:decroissant}, we establish that the assumption that $c_0$ is a decreasing function such that $c_0(-1)-c_0(1)>1$ and that the $\tilde{\bJ}(0)$ is sufficiently small guarantee that $c(t, ·)$ is also decreasing and satisfies $c(t,-1)-c(t,1)>1$ for any existence time $t > 0$. In a second step, in Proposition \ref{faibleBDP1:finite}, we prove the blow-up character.
\begin{lemma}\label{lem:decroissant}
Assume that $M>1$. Assume in addition that the initial condition $c_0$ of \eqref{eq:1D_modified} is decreasing, satisfies $c_0(-1)-c_0(1)>1$ and $\int_{-1}^1 (x+1) c_0(x) \D x <(M-1)/2$. Then any solution $c$ to \eqref{eq:1D_modified}, if it exists, is non-increasing and satisfies $c(t,-1)-c(t,1)>1$.   
\end{lemma}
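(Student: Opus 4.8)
The plan is to run a continuity (bootstrap) argument in time. First I would record two structural facts: positivity of $c$ is preserved (same argument as for \eqref{eq:1D}), and, since the coefficient $a(t):=c(t,-1)-c(t,1)-1$ depends on $t$ only, equation \eqref{eq:1D_modified} can be written $\partial_t c=\partial_{xx}c+a(t)\partial_x c$, i.e. a linear parabolic equation once $a(\cdot)$ is regarded as a prescribed coefficient. Let $[0,T_{\max})$ be the maximal existence interval of the (classical) solution and set
\[
T^\ast=\sup\Bigl\{\tau\in(0,T_{\max}):\ c(t,\cdot)\ \text{non-increasing and}\ c(t,-1)-c(t,1)>1\ \ \forall\,t\in[0,\tau]\Bigr\}.
\]
By the hypotheses on $c_0$ and continuity in $t$ of $c$ and of its boundary traces, $T^\ast>0$; the goal is $T^\ast=T_{\max}$, and I would argue by contradiction assuming $T^\ast<T_{\max}$.

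For the monotonicity part, on $[0,T^\ast)$ one has $a(t)\ge0$, so differentiating \eqref{eq:1D_modified} in $x$ shows that $w:=\partial_x c$ solves the same linear equation, with Dirichlet boundary data obtained by differentiating \eqref{eq:1D_modified_bord}: $w(t,\pm1)=\partial_x c(t,\pm1)=-a(t)\,c(t,\pm1)\le0$ (using $c\ge0$); since $w(0,\cdot)=c_0'\le0$, the parabolic maximum principle gives $w\le0$ up to time $T^\ast$, so $c(T^\ast,\cdot)$ is non-increasing, and this property persists slightly beyond $T^\ast$ whenever $a$ stays $\ge0$ there. Hence, by maximality of $T^\ast$, the condition that fails at $T^\ast$ can only be the trace inequality: $c(T^\ast,-1)-c(T^\ast,1)=1$.

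For the trace-difference part I would combine two ingredients. (i) Testing \eqref{eq:1D_modified} against $x+1$ and using \eqref{eq:1D_modified_bord} (both boundary terms drop), the shifted moment $\tilde{\bJ}(t)=\int_{-1}^1(x+1)c(t,x)\,dx$ satisfies
\[
\tfrac{d}{dt}\tilde{\bJ}(t)=M-(M-1)\bigl(c(t,-1)-c(t,1)\bigr).
\]
(ii) A geometric inequality valid for any non-negative non-increasing $f\in L^1(-1,1)$ with $\int_{-1}^1 f=M$:
\[
f(-1)-f(1)\ \ge\ -2\int_{-1}^1 x f(x)\,dx\ =\ 2\Bigl(M-\int_{-1}^1(x+1)f\,dx\Bigr),
\]
which one gets by writing $f(x)=f(1)+\int_x^1(-f'(s))\,ds$ and noting $-\int_{-1}^1 xf=\tfrac12\int_{-1}^1(1-s^2)(-f'(s))\,ds\le\tfrac12\bigl(f(-1)-f(1)\bigr)$. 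Since $c(t,\cdot)$ is non-increasing on $[0,T^\ast)$, (ii) applies; plugging it into (i) yields a closed differential inequality $\tfrac{d}{dt}\tilde{\bJ}\le 2(M-1)\tilde{\bJ}-M(2M-3)$, and, fed with the smallness $\tilde{\bJ}(0)<(M-1)/2$, this should confine $\tilde{\bJ}$ below $M-\tfrac12$ on $[0,T^\ast]$. Then (ii) again gives $c(T^\ast,-1)-c(T^\ast,1)\ge 2\bigl(M-\tilde{\bJ}(T^\ast)\bigr)>1$, contradicting the previous paragraph; hence $T^\ast=T_{\max}$, which is the claim.

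The routine part is the monotonicity step. The hard part will be closing the estimate for $\tilde{\bJ}$: a priori $\tfrac{d}{dt}\tilde{\bJ}$ is not signed (it is positive when $c(t,-1)-c(t,1)$ is only slightly above $1$), so one must exploit carefully the feedback ``$\tilde{\bJ}$ small $\Rightarrow$ trace difference large $\Rightarrow$ (since $M>1$) $\tfrac{d}{dt}\tilde{\bJ}$ strongly negative'' to prevent $\tilde{\bJ}$ from drifting up to the threshold $M-\tfrac12$ — and it is precisely here that all three hypotheses on $c_0$ (decreasing, $c_0(-1)-c_0(1)>1$, and $\tilde{\bJ}(0)<(M-1)/2$) are used together.
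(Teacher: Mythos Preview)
Your bootstrap framework and the monotonicity argument (via the maximum principle for $w=\partial_x c$ with Dirichlet data $w(t,\pm1)=-a(t)c(t,\pm1)\le 0$) are correct and essentially identical to what the paper does. The problem is in the trace--difference part: your geometric inequality (ii), while correct, is too weak to close the loop, and the sentence ``this should confine $\tilde{\bJ}$ below $M-\tfrac12$'' is precisely where the argument breaks.

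Your differential inequality reads $\tfrac{d}{dt}\tilde{\bJ}\le 2(M-1)\tilde{\bJ}-M(2M-3)$. This linear ODE has equilibrium $y^\ast=\dfrac{M(2M-3)}{2(M-1)}$, which is \emph{unstable} since $2(M-1)>0$. For $1<M<\tfrac{1+\sqrt5}{2}$ one checks $y^\ast<\tfrac{M-1}{2}$ (indeed $y^\ast<0$ for $M<\tfrac32$), so the hypothesis $\tilde{\bJ}(0)<\tfrac{M-1}{2}$ does \emph{not} place $\tilde{\bJ}(0)$ below the fixed point, and the comparison ODE runs off to $+\infty$. Concretely, at $\tilde{\bJ}=M-\tfrac12$ your right-hand side equals $2(M-1)(M-\tfrac12)-M(2M-3)=1>0$, so nothing prevents $\tilde{\bJ}$ from crossing $M-\tfrac12$. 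The feedback you describe (``$\tilde{\bJ}$ small $\Rightarrow$ trace difference large'') is qualitatively right, but (ii) only gives $\alpha\ge 2(M-\tilde{\bJ})\le 2M$, a bounded lower bound, which is not ``strongly negative'' enough when $M$ is close to $1$.

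What the paper does instead is exactly the missing ingredient: since $c(t,\cdot)$ is decreasing on $[0,t_0]$, the function $-\partial_x c/(c(t,-1)-c(t,1))$ is a probability density, and Jensen's inequality applied to $(x+1)$ and $(x+1)^2$ yields
\[
\bigl(M-2c(t,1)\bigr)^2\ \le\ \bigl(c(t,-1)-c(t,1)\bigr)\bigl(2\tilde{\bJ}(t)-4c(t,1)\bigr).
\]
This is the crucial estimate: it gives $\alpha\gtrsim M^2/(2\tilde{\bJ})$, a lower bound that \emph{blows up} as $\tilde{\bJ}\to 0$. Plugged into $\tfrac{d}{dt}\tilde{\bJ}=M-(M-1)\alpha$ it produces (after handling the $c(t,1)$ terms)
\[
\frac{d}{dt}\tilde{\bJ}\ \le\ \frac{M^2(1-M)}{2\tilde{\bJ}}\Bigl(1-\frac{2\tilde{\bJ}}{M-1}\Bigr),
\]
whose right-hand side is $\le 0$ precisely when $\tilde{\bJ}<\tfrac{M-1}{2}$. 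This closes immediately: $\tilde{\bJ}$ is non-increasing, hence stays below $\tfrac{M-1}{2}$, and then the same interpolation gives $\alpha\ge M/(M-1)>1$ for all $t$. Replace your (ii) by this Jensen step and your argument goes through.
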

\begin{proof}
Since we supposed that $c_0(-1)-c_0(1)>1$, it remains true at least until a time $t_0 \in (0,T)$, where $T$ is the existence time. We choose the maximal $t_0$ possible. For all $t\in [0,t_0]$, $c(t,\cdot)$ is decreasing. In fact the derivative $u(t,x)= \partial_x c(t,x)$, satisfies the following parabolic type equation without any source term:
\begin{equation}\label{eq:1D_modified_2}
\partial _t u(t,x) =  \partial _{xx} u(t,x)+\partial_x\Big( \left(-1 + c(t,-1)-c(t,1)\right) u(t,x)\Big) \, .
\end{equation}
The solution is initially non-positive, and also initially non-positive on the boundary due to \eqref{eq:1D_modified_bord} and the assumption $c_0(-1)-c_0(1)>1$.  From classical strong maximum principle, we deduce that $c(t,\cdot)$ is decreasing for all $t\in [0,t_0]$.

Therefore, for all $t\in [0,t_0]$, $-\partial_x c(t,x)/\left(c(t,-1)-c(t,1)\right)$ is a probability density. From Jensen's inequality, we deduce the following interpolation estimate:
\[\left(\int_{-1}^1 (x+1) \frac{-\partial_x c(t,x)}{c(t,-1)-c(t,1)}\D  x\right)^2  \leq 
\int_{-1}^1  (x+1)^2 \frac{-\partial_x c(t,x)}{c(t,-1)-c(t,1)}\D  x\, ,\]
hence, using that $c(t,-1)>c(t,1)$ for any time $t\in [0,t_0]$, it follows that
\begin{equation}\label{interpol}
\left(M- 2c(t,1) \right)^2  \leq \left( c(t,-1)-c(t,1) \right) \left( 2\int_{-1}^1   (x+1)  c(t,x) \D  x - 4c(t,1)\right)\, .
\end{equation}

Consequently, for all $t$ in $[0,t_0]$, the first momentum shifted in $x=-1$, $\tilde{\bJ}(t) = \int_{-1}^1   (x+1) c(t,x)\D  x$, which is non-negative and such that  $2\tilde{\bJ}(t) \ge 4 c(t,1)$, for all $t$ in $[0,t_0]$, thanks to  (\ref{interpol}),  satisfies:
\begin{eqnarray}
\frac{\D }{\D t} \tilde{\bJ}(t)  & \leq &  M + (1-M) \left(c(t,-1)-c(t,1)\right) \leq (1-M) \frac{(M - 2c(t,1))^2}{2\tilde{\bJ}(t) - 4 c(t,1)} +M \nonumber \\
& \leq &(1-M) \frac{M^2 - 4M c(t,1)}{2\tilde{\bJ}(t)} +M\, , \label{eq:premier_moment} 
\end{eqnarray}
as $M>1$. Using again that $2\tilde{\bJ}(t) \ge 4 c(t,1)$, we deduce that
\begin{eqnarray*}
\frac{\D }{\D t} \tilde{\bJ}(t)  &\leq & \frac{M(1-M)}{2\tilde{\bJ}(t)} \left(M - 2\tilde{\bJ}(t) \right) +M\leq \frac{M^2(1-M)}{2\tilde{\bJ}(t)} \left( 1-\frac{2\tilde{\bJ}(t)}{M-1}\right)\, .
\end{eqnarray*}
Since $\tilde{\bJ}(0)<(M-1)/2$ and $\tilde{\bJ}(t)\ge 0$ we deduce that for all $t \in [0,t_0]$: $\frac{\D }{\D t} \tilde{\bJ}(t) \le 0$, hence
$\tilde{\bJ}(t) \le \tilde{\bJ}(0)<\frac{M-1}{2}$.
Consequently, using  (\ref{interpol}), it follows that for all $t$ in $[0,t_0]$, 
\begin{equation*}
c(t-1)-c(t,1) \ge \frac{M\left( M-2\tilde{\bJ}(0) \right)}{2\tilde{\bJ}(0)} \ge  \frac{M}{M-1}>1 \, ,
\end{equation*} 
hence $t_0=T$ is the existence time of $c$.
\end{proof}

We can now prove a blow-up result for \eqref{eq:1D_modified} - \eqref{eq:1D_modified_bord}.
\begin{proposition}\label{faibleBDP1:finite}
Assume $M>1$ and that the first moment shifted in 1 is initially small: $\tilde{\bJ}(0) <  (M-1)/2$. Assume in addition that $c_0$ is decreasing and satisfies $c_0(-1)-c_0(1)>1$. Then the solution to \eqref{eq:1D_modified} - \eqref{eq:1D_modified_bord} with initial data $c(0,x) = c_0(x)$ blows-up in finite time.
 \end{proposition}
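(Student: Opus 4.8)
The plan is to convert the pointwise information supplied by Lemma~\ref{lem:decroissant} into a closed differential inequality for the shifted first moment $\tilde{\bJ}(t)=\int_{-1}^1(x+1)c(t,x)\D x$ that is incompatible with global existence, along the lines of the classical moment argument.

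First I would record what Lemma~\ref{lem:decroissant} already provides on the maximal existence interval $[0,T)$: the profile $c(t,\cdot)$ is non-increasing, $c(t,-1)-c(t,1)>1$, and $0\le\tilde{\bJ}(t)\le\tilde{\bJ}(0)<(M-1)/2$. I would then reuse the chain of estimates carried out inside the proof of that lemma --- testing \eqref{eq:1D_modified} against $x+1$ (the boundary term at $x=1$ cancels by \eqref{eq:1D_modified_bord}, and the factor $x+1$ kills the one at $x=-1$), which gives $\frac{\D}{\D t}\tilde{\bJ}(t)=M+(1-M)\big(c(t,-1)-c(t,1)\big)$, then bounding $c(t,-1)-c(t,1)$ from below through the Jensen interpolation inequality \eqref{interpol} (legitimate precisely because $c(t,\cdot)$ is monotone, so $-\partial_x c/(c(t,-1)-c(t,1))$ is a probability density) and using $M>1$ together with $2\tilde{\bJ}(t)\ge 4c(t,1)\ge 0$ --- which altogether yields
\begin{equation*}
\frac{\D}{\D t}\tilde{\bJ}(t)\ \le\ -\,\frac{M^2(M-1)}{2\,\tilde{\bJ}(t)}\left(1-\frac{2\tilde{\bJ}(t)}{M-1}\right).
\end{equation*}

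Next I would close the estimate. Since $\tilde{\bJ}(t)\le\tilde{\bJ}(0)$ the parenthesis is bounded below by the fixed constant $\eta:=1-2\tilde{\bJ}(0)/(M-1)>0$; writing $\kappa:=\tfrac12 M^2(M-1)\eta>0$ this reads $\tilde{\bJ}(t)\,\frac{\D}{\D t}\tilde{\bJ}(t)\le-\kappa$, i.e. $\frac{\D}{\D t}\big(\tilde{\bJ}(t)^2\big)\le-2\kappa$. Integration yields $\tilde{\bJ}(t)^2\le\tilde{\bJ}(0)^2-2\kappa t$; since $\tilde{\bJ}(t)^2\ge0$ whenever the solution exists (as $c\ge0$ and $x+1\ge0$ on $(-1,1)$), the maximal existence time must satisfy $T\le\tilde{\bJ}(0)^2/(2\kappa)<+\infty$. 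By the standard continuation criterion for the modified problem --- a local solution extends past $T$ as soon as the controlling norm stays bounded up to $T$ --- the breakdown at this finite time is a genuine blow-up; heuristically, $c(t,\cdot)$ being non-increasing with $\tilde{\bJ}(t)\to0$ forces the whole mass $M$ to concentrate at $x=-1$, so that $\|c(t,\cdot)\|_{L^\infty}\to+\infty$.

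The substance of the argument is already contained in Lemma~\ref{lem:decroissant}, and that is where I expect the main obstacle to lie: propagating the monotonicity of the profile (via the strong maximum principle applied to $u=\partial_x c$, which solves the source-free parabolic equation \eqref{eq:1D_modified_2} with boundary data that remains non-positive) and establishing the uniform bound $\tilde{\bJ}(t)\le\tilde{\bJ}(0)$. For the proposition itself the remaining points are purely technical: justifying the above ODE manipulation at the level of weak solutions --- i.e. that $t\mapsto\tilde{\bJ}(t)$ is absolutely continuous, which follows from the weak formulation --- and turning ``no solution beyond $\tilde{\bJ}(0)^2/(2\kappa)$'' into a blow-up of a norm of $c$ through the continuation criterion.
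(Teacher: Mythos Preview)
Your argument is correct and follows the same moment-blow-up strategy as the paper: both start from the differential inequality for $\tilde{\bJ}$ produced in Lemma~\ref{lem:decroissant}, freeze the parenthesis using $\tilde{\bJ}(t)\le\tilde{\bJ}(0)$, and deduce that a nonnegative quantity decreases linearly in time. The only difference is cosmetic: the paper introduces an auxiliary majorant $\bK(t)=\tilde{\bJ}(0)+\frac{(1-M)M^2}{2}\eta\int_0^t\tilde{\bJ}(s)^{-1}\D s$ and shows $\frac{\D}{\D t}\bK(t)^2\le -2\kappa$, whereas you multiply the inequality directly by $\tilde{\bJ}(t)$ to obtain $\frac{\D}{\D t}\tilde{\bJ}(t)^2\le -2\kappa$; your route is slightly shorter and avoids the extra function, while the paper's version has the mild advantage of never formally dividing by $\tilde{\bJ}$.
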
 
\begin{proof}
From Lemma \eqref{lem:decroissant} it follows that 
\begin{equation}
\frac{\D }{\D t} \tilde{\bJ}(t) \le \frac{M^2(1-M)}{2\tilde{\bJ}(t)} \left( 1-\frac{2\tilde{\bJ}(t)}{M-1}\right)\le  \frac{M^2(1-M)}{2\tilde{\bJ}(t)} \left( 1-\frac{2\tilde{\bJ}(0)}{M-1}\right)\, .
\end{equation}
Hence, since $\tilde{\bJ}(0) < (M-1)/2$,  we deduce the inequality
\begin{eqnarray}
\tilde{\bJ}(t)  & \leq &\tilde{\bJ}(0) + \frac{(1 - M)M^2}{2} \left( 1-\frac{2\tilde{\bJ}(0)}{M-1}\right)\int_0^t \frac{1}{\tilde{\bJ}(s)}\D s \label{eq:moment0}\, .
\end{eqnarray}
Now, $\tilde{\bJ}$ being non-increasing on $[0,t_0]$, we deduce that 
\begin{equation}
\tilde{\bJ}(t)   \leq \tilde{\bJ}(0) + \frac{(1 - M)M^2}{2} \left( 1-\frac{2\tilde{\bJ}(0)}{M-1}\right) \frac{1}{\tilde{\bJ}(0)}t_0\, ,
\end{equation}
and with $M>1$, we deduce that the maximal time of existence $t_0=T^*$ of a solution is necessarily finite.
Finally, following \cite{JL}, it can be proved that 
\[ \lim_{K\to +\infty} \left( \sup_{t\in(0,T^*)} \int_{-1}^1 (c(t,x)-K)_+\D x\right) >0\, , \]
showing that the solution becomes singular in $T^*$. Otherwise a truncation method enables to prove local existence by replacing $c$ with $(c - K)_+$ for $K$ sufficiently large.
\end{proof}

\subsubsection{Blow-up in the Super-critical case}

In this part, we prove Theorem \ref{th:BU}. To do so, following \cite{Lepoutre_Meunier_Muller_JMPA}, we state a so-called concentration-comparison principle on the equation obtained from \eqref{eq:1D} after space integration. This principle together with the use of a subsolution that blows-up allow proving the blow-up character of solutions to \eqref{eq:1D} above the critical mass.

\begin{lemma}\label{lem:ccp_sub_super}
Let $C,\bar C, \underline{C}$ be non-decreasing (in space)  functions in $C^1(0,T;C^2([-1,1]))$ satisfying 
\begin{equation}\label{main_integre}
\begin{cases}
\partial_t C(t,x)-\partial_{xx}C(t,x)-\Big(x+\partial_xC(t,-1)-\partial_xC(t,1)\Big)\partial_xC(t,x)=0\, ,\\
\partial_t \bar C(t,x)-\partial_{xx}\bar C(t,x)-\Big(x+\partial_x\bar C(t,-1)-\partial_x \bar C(t,1)\Big)\partial_x\bar C(t,x)\geq 0\, ,\\
\partial_t \underline{C}(t,x)-\partial_{xx}\underline{C}(t,x)-\Big(x+\partial_x\underline{C}(t,-1)-\partial_x\underline{C}(t,1)\Big)\partial_x\underline{C}(t,x)\leq 0\, ,\\
C(t,-1)=\bar C(t,-1)= \underline{C}(t,-1)=0\, .
\end{cases}
\end{equation}
Assume that $ \underline{C}(0,\cdot) \le C(0,\cdot) \le \bar C(0,\cdot)$ and that $\partial_x\bar C(0,-1)- \partial_x\bar C(0,1)>\partial_x\underline{C}(0,-1)-\partial_x\underline{C}(0,1)$, then the following inequality holds true for all $0<t\le T$:
\[
\underline{C}(t,.)< \bar C(t,.).
\]
\end{lemma}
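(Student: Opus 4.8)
The plan is to prove this concentration-comparison principle by a maximum-principle argument applied to the difference $W(t,x) := \underline{C}(t,x) - \bar C(t,x)$, which satisfies $W(0,\cdot) \le 0$ and $W(t,-1) = 0$. First I would write down the PDE (or differential inequality) satisfied by $W$: subtracting the inequalities for $\underline{C}$ and $\bar C$ in \eqref{main_integre}, one gets
\begin{equation*}
\partial_t W - \partial_{xx} W - \big(x + \partial_x \underline{C}(t,-1) - \partial_x \underline{C}(t,1)\big) \partial_x W \le \big(\partial_x\bar C(t,-1) - \partial_x\bar C(t,1) - \partial_x\underline C(t,-1) + \partial_x\underline C(t,1)\big)\, \partial_x \bar C(t,x)\, ,
\end{equation*}
where the right-hand side is the non-local coupling term, which is \emph{not} sign-definite a priori. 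So the key technical point is to first control the sign of the non-local coefficient difference $\Delta(t) := \big(\partial_x\bar C(t,-1) - \partial_x\bar C(t,1)\big) - \big(\partial_x\underline C(t,-1) - \partial_x\underline C(t,1)\big)$.

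The strategy is a continuity/bootstrap argument. By hypothesis $\Delta(0) > 0$; let $t_1$ be the first time (if any) at which $\Delta$ vanishes. On $[0,t_1)$ the coefficient $\partial_x\bar C(t,x) \ge 0$ (since $\bar C$ is non-decreasing in space) multiplies the positive quantity $\Delta(t)$, so the right-hand side of the inequality for $W$ is $\le 0$; $W$ then satisfies a linear parabolic differential inequality $\partial_t W - \partial_{xx} W - b(t,x)\partial_x W \le 0$ with $W(0,\cdot)\le 0$ and $W(t,-1)=0$. To close the maximum-principle argument I still need information at the right endpoint $x=1$: here I would use the boundary conditions implicitly encoded in the fact that $C,\bar C,\underline C$ arise as spatial antiderivatives of solutions/super-/sub-solutions of \eqref{eq:1D} (so $\partial_x C(t,1)$ etc. play the role of the traces), together with the flux-type relations; alternatively one works on $[-1,1]$ with the zero-flux structure to get a one-sided condition at $x=1$. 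Granting that, the parabolic maximum principle (or Hopf's lemma for the strict inequality) gives $W(t,\cdot) < 0$, i.e. $\underline C(t,\cdot) < \bar C(t,\cdot)$, on $[0,t_1)$; in particular $\partial_x \underline C(t,1) \le \partial_x \bar C(t,1)$ and $\partial_x\underline C(t,-1) \ge \partial_x \bar C(t,-1)$ would follow from the ordering at the endpoints, which would force $\Delta(t)$ to stay positive and preclude $t_1$ from being finite — so $t_1 = T$ and we conclude on all of $(0,T]$.

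I would carry out the steps in this order: (1) derive the differential inequality for $W$ and identify the non-local term; (2) set up the maximal time $t_1$ up to which $\Delta(t) > 0$, and observe that on $[0,t_1)$ the source term in the $W$-inequality has the right sign because $\partial_x\bar C \ge 0$; (3) apply the comparison/strong maximum principle on $[0,t_1)\times[-1,1]$, using $W(0,\cdot)\le 0$, the boundary value at $x=-1$, and the flux/monotonicity structure at $x=1$, to get $W < 0$, hence the strict ordering $\underline C < \bar C$ and the ordering of the relevant derivatives at $x=\pm1$; (4) feed this back to show $\Delta$ cannot reach $0$, so $t_1=T$. The main obstacle I anticipate is step (2)--(3): keeping the sign of the non-local coupling under control simultaneously with running the maximum principle is genuinely circular (the sign of $\Delta$ is what makes the maximum principle applicable, and the maximum principle is what keeps $\Delta$ positive), so the argument must be phrased carefully as a \emph{bootstrap on a maximal interval} rather than a one-shot estimate; handling the boundary behavior at $x=1$ correctly (since only $x=-1$ is pinned to $0$) is the other delicate point and is where the non-decreasing hypothesis and the precise form of the equation after space integration must be used.
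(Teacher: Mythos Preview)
Your plan is the paper's: take the difference, define $t_1$ as the maximal time with $\Delta(t)>0$, apply the strong maximum principle on $[0,t_1)$ to get strict ordering, and close the bootstrap via Hopf's lemma at the endpoints. The paper writes $\delta C=\bar C-\underline C$, keeps the $\bar C$-trace in the transport coefficient and $\partial_x\underline C$ in the source, but this is cosmetic.

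Two sign slips would derail your write-up, though. In the displayed inequality for $W=\underline C-\bar C$, the right-hand side should be $-\Delta(t)\,\partial_x\bar C(t,x)$ (equivalently $\big(a_{\underline C}-a_{\bar C}\big)\partial_x\bar C$); with the sign you wrote it is $\ge 0$, not $\le 0$ as you then assert. More seriously, your endpoint derivative orderings in step~(4) are reversed: from $W<0$ on $(-1,1)$ with $W(t,-1)=0$, Hopf gives $\partial_x W(t,-1)<0$, i.e.\ $\partial_x\underline C(t,-1)<\partial_x\bar C(t,-1)$ (not $\ge$), and at $x=1$ one obtains $\partial_x\underline C(t,1)>\partial_x\bar C(t,1)$. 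It is \emph{these} inequalities that combine to give $\Delta(t_1)=\big(\partial_x\bar C-\partial_x\underline C\big)(t_1,-1)-\big(\partial_x\bar C-\partial_x\underline C\big)(t_1,1)>0$, contradicting the definition of $t_1$. With the signs as you wrote them you would conclude $\Delta\le 0$ and the bootstrap collapses. Regarding your worry about $x=1$: the paper does not invoke any flux structure there, it simply applies Hopf's lemma at that endpoint as well.
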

\begin{proof}
From \eqref{main_integre}, we deduce that $\delta C=\bar C- \underline{C}$ satisfies the parabolic inequation
\begin{eqnarray*}
& &\partial_t \delta C(t,x)-\partial_{xx}\delta C(t,x)-\Big(x+\partial_x\bar C(t,-1)-\partial_x\bar C(t,1)\Big)\partial_x\delta C(t,x) \\
& &\ge\bigg( \Big(\partial_x\bar C(t,-1)-\partial_x\bar C(t,1)\Big)- \Big(\partial_x\underline{C}(t,-1)-\partial_x\underline{C}(t,1)\Big) \bigg)\partial_x\underline{C}(t,x)\, ,
\end{eqnarray*}
with $\bar C(t,-1)= \underline{C}(t,-1)=0$.
Since we supposed that $\partial_x\bar C(0,-1)- \partial_x\bar C(0,1)>\partial_x\underline{C}(0,-1)-\partial_x\underline{C}(0,1)$, it remains true at least until a time $T' \in (0,T)$, and we choose the maximal $T'$ possible. Since $\partial_x\underline{C}\geq 0$, on the time interval $[0,T']$, we have
\[
\begin{cases}
\partial_t \delta C(t,x)-\partial_{xx}\delta C(t,x)-\Big(\partial_x\bar C(t,-1)-\partial_x\bar C(t,1)\Big)\partial_x\delta C(t,x) - x\partial_{x}\delta C(t,x)\geq 0\, ,\\
\bar C(t,-1)= \underline{C}(t,-1)=0\, .
\end{cases}
\]
Hence, by strong maximum principle \cite{Evans}, $\bar C> \underline{C}$ on $(0,T')\times (-1,1)$. Furthermore, by Hopf Lemma (see \cite{Evans}), we also have 
\[
\begin{cases}
\partial_x \delta C(T',-1)=\partial_x\bar C(T',-1)-\partial_x\underline{C}(T',-1)>0\, ,\\
\partial_x \delta C(T',1)=\partial_x\bar C(T',1)-\partial_x\underline{C}(T',1)<0\, .
\end{cases}
\]
As $T'$ is maximal we immediately conclude that $T'=T$.
\end{proof}

\begin{proof}{Proof of Theorem \ref{th:BU}.}
Let $\underline{c}_0$ be a decreasing function such that $\underline{c}_0(x) \le c_0(x)$ for all $x \in [-1,1]$ and which satisfies $\int_{-1}^1 \underline{c}_0(x) \D x >1$. Assume in addition that $c_0(-1)-c_0(1)>\underline{c}_0(-1)-\underline{c}_0(1)>1$. Finally assume that $\int_{-1}^1 (x+1) \underline{c}_0(x) \D x <(M-1)/2$, where we recall that $M=\int_{-1}^1 c(t,x) \D x>1$. First according to Proposition \ref{faibleBDP1:finite}, we know that the solution $\underline{c}$ to \eqref{eq:1D_modified} - \eqref{eq:1D_modified_bord} with initial data $\underline{c}(0,x) = \underline{c}_0(x)$ blows-up in finite time. 

We will now prove that the distribution function
$\underline{C}(t,x)=\int_{-1}^x \underline{c}(t,y)\D y$ satisfies 
\[\partial_t \underline{C}(t,x)-\partial_{xx}\underline{C}(t,x)-\Big(x+\partial_x\underline{C}(t,-1)-\partial_x\underline{C}(t,1)\Big)\partial_x\underline{C}(t,x)\leq 0\, .\]
Indeed integrating \eqref{eq:1D_modified} - \eqref{eq:1D_modified_bord} in space, one obtains
\[ \partial _t \underline{C}(t,x) -\partial_{xx} \underline{C}(t,x) - \Big( -1+\partial_x\underline{C}(t,-1)- \partial_x \underline{C}(t,1)\Big) \partial_x \underline{C}(t,x)  =0\, .  \]
Hence,
\[ \partial _t \underline{C}(t,x) -\partial_{xx} \underline{C}(t,x) - \Big( x+\partial_x\underline{C}(t,-1)- \partial_x \underline{C}(t,1)\Big) \partial_x \underline{C}(t,x)  =-(x+1) \partial_x  \underline{C}(t,x) \le 0\, ,  \]
where the last inequality follows from the non-decreasing character of $\underline{C}(t,\cdot)$ together with $x\ge -1$. 
Hence, from Lemma \ref{lem:ccp_sub_super}, it follows that $\underline{C}(t,.)< \bar C(t,.)$ for all time below the existence time $T$, where $\bar C(t,.)$ is any function defined in \eqref{main_integre}. In particular, for $c$ solution to \eqref{eq:1D} with initial data $c(0,x) = c_0(x)$, we can set $\bar C (t,x)=\int_{-1}^x c(t,y) \D y$, which yields the blow-up character of $c$. 
\end{proof}

\section{The model with dynamical exchange of markers at the boundary: prevention of blow-up and asymptotic behaviour} \label{sec:ODE/PDE}

In Section \ref{sec:M>1}, we proved that finite time blow-up occurs in the basic model \eqref{eq:1D} when mass is super-critical $M>1$. Such a behaviour is not realistic from a biological viewpoint. Here we modifiy the model model \eqref{eq:1D} by considering markers that are sticked to the boundary and thus create the attracting drift. More precisely, in this part we will study the stationary states associated with the following model:
\begin{equation} \label{eq:Attach}
\begin{cases}
\partial _t c(t,x)=  \partial _{xx} c(t,x) + \partial _x \left(\left( x +  \mu_- (t)-\mu_+(t)\right) c(t,x)\right) \, , \quad t >0\, , \, x\in (-1,1) \\ 
\frac{\D }{\D t}\mu_-(t)=   c(t,-1)-  \mu_-(t)\, ,\\
\frac{\D }{\D t}\mu_+(t)=   c(t,1)-  \mu_+(t)\, ,
\end{cases}
\end{equation}
together with the flux condition at the  boundary:
\begin{equation} \label{eq:BC2dim}
\begin{cases}
\partial _x c (t,-1)+ \left(-1+\mu_- (t)-\mu_+(t)\right) c (t,-1)=\frac{\D }{\D t}\mu_-(t)\, , \\
\partial _x c (t,1)+  \left(1+\mu_- (t)-\mu_+(t)\right)  c (t,1)=-\frac{\D }{\D t}\mu_+(t)\, .
\end{cases}
\end{equation}
The quantities $\mu_\pm$ represent the concentrations of markers which are sticked to the boundary and thus create the attracting drift $\mu_- (t)-\mu_+(t)$. 
The dynamics of $\mu_\pm$ is driven by simple attachment/detachment kinetics. The mass of molecular markers is shared between the free particles $c(t,x)$ and the particles on the boundary $\mu_\pm(t)$. The boundary condition (\ref{eq:BC2dim}) guarantees conservation of the total mass:
\begin{equation} 
\int_{-1}^1 c(t,x)\D x + \mu_-(t)+\mu_+(t) = M\, . \label{eq:mass conservation dim}\end{equation} 
From (\ref{eq:mass conservation dim}), we easily deduce that finite time blow-up cannot occur since $|\mu_-(t)-\mu_+(t)|$ is bounded by $M$. As a consequence, the problem under study is a conservative convection-diffusion equation with bounded advection term. We denote by $m(t)$ the mass of free particles: 
\begin{equation*}
m(t) = \int_{-1}^1 c(t,x)\D x\, .
\end{equation*}
The conservation of mass reads 
\[
\frac{\D }{\D t}m(t) + \frac{\D }{\D t}\left(\mu_-(t)+\mu_+(t)\right) = 0\, .\]


Stationary solutions $(c,\mu_-,\mu_+)$ are characterized  by:
\begin{equation*}
\begin{cases}
c(x) = c(-1) e^{-\frac{x^2 - 1}{2} - \alpha(x+1)} \, , \, x\in (-1,1) \\
\alpha = c(-1) (1 - e^{-2\alpha})\, , \\
\int_{-1}^1 c(x) \D x +c(-1) + c(1)= M\,, \\
(\mu_-,\mu_+) = (c(-1) , c(1))\, ,
\end{cases}
\end{equation*}
where we have denoted $\alpha = c(-1) - c(1)$.

We establish the following lemma.

\begin{lemma}\label{lemma:ESattach_detach}
Denote $M_0 =1 + \frac{1}{2} \int_{-1}^1 exp\left(-\frac{x^2 -1}{2}\right) \D x >1$. Then, 
\begin{itemize}
\item[$\bullet$] for $M\leq M_0$, the problem \eqref{eq:Attach} - \eqref{eq:BC2dim} admits a unique symmetric stationary solution $G_M$ defined by:
\begin{equation}
G_M(x) = \frac{M}{2  + \int_{-1}^1 e^{-\frac{x^2 - 1}{2}} \D x}  e^{-\frac{x^2 - 1}{2}} \D x\, .
\end{equation}
 \item[$\bullet$] for $M >M_0$, there are two other asymmetric solutions $G_{\pm \alpha}$, with
\begin{equation*} 
G_{\alpha}(x) =  \frac{\alpha }{1 - e^{-2 \alpha}} e^{-\frac{x^2 - 1}{2} -\alpha (x+1)}\, ,
\end{equation*}  
 for some $\alpha >0$ defined by the mass constraint 
\begin{equation*}
M_\alpha := \frac{\alpha}{1 - e^{-2\alpha}}\left(2 + \int_{-1}^1 e^{-\frac{x^2 - 1}{2} -\alpha (x+1)} \D x\right)  -\alpha  = M\, .
\end{equation*} 
\end{itemize}
\end{lemma}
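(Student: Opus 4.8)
The plan is to reduce the whole problem to a single scalar equation for $\alpha := \mu_- - \mu_+$. As recorded just above, any stationary triple $(c,\mu_-,\mu_+)$ satisfies $\mu_\pm = c(\pm 1)$ (steady state of the two ODEs), while the steady zero-flux condition forces $\partial_x c + (x+\alpha) c \equiv 0$ with $\alpha = c(-1)-c(1)$; integrating from $x=-1$ gives $c(x) = c(-1)\, e^{-(x^2-1)/2-\alpha(x+1)}$, evaluating at $x=1$ gives the consistency relation $\alpha = c(-1)(1-e^{-2\alpha})$, and the mass balance $\int_{-1}^1 c\,\D x + c(-1)+c(1) = M$ closes the system. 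So everything amounts to counting the admissible $\alpha$ for a given $M>0$. If $\alpha = 0$, then $c(-1)=c(1)$ and $c(x)=c(-1)e^{-(x^2-1)/2}$; the consistency relation is then vacuous, so the mass balance alone determines $c(-1)$, producing exactly $G_M$, which is a stationary state for every $M>0$.

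Next I would treat the asymmetric branch $\alpha\neq 0$. Since necessarily $c(-1)>0$ (otherwise $c\equiv 0$ and $M=0$), the consistency relation forces $c(-1) = \alpha/(1-e^{-2\alpha})$, a quantity that is positive for every $\alpha\neq 0$; hence $c = G_\alpha$ with $G_\alpha$ as in the statement, and $G_\alpha>0$ on $[-1,1]$, so no positivity constraint is active. Substituting $G_\alpha$ into the mass balance and using $G_\alpha(-1)+G_\alpha(1) = \alpha(1+e^{-2\alpha})/(1-e^{-2\alpha}) = 2\alpha/(1-e^{-2\alpha})-\alpha$, the mass balance becomes precisely $M_\alpha = M$ with $M_\alpha$ as in the statement. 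The change of variable $x\mapsto -x$ leaves \eqref{eq:Attach} - \eqref{eq:BC2dim} invariant and sends $G_\alpha$ to $G_{-\alpha}$, so $M_{-\alpha}=M_\alpha$; it therefore suffices to solve $M_\alpha = M$ for $\alpha>0$, each such solution $\alpha$ contributing the pair of (distinct, non-symmetric) states $G_{\pm\alpha}$.

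Finally I would analyse $\alpha\mapsto M_\alpha$ on $(0,+\infty)$. It is smooth; since $\alpha/(1-e^{-2\alpha})\to \frac{1}{2}$ as $\alpha\to 0^+$ one gets $\lim_{\alpha\to 0^+}M_\alpha = 1+\frac{1}{2}\int_{-1}^1 e^{-(x^2-1)/2}\,\D x = M_0$, while $\lim_{\alpha\to +\infty}M_\alpha = +\infty$, because the boundary contribution $2\alpha/(1-e^{-2\alpha})-\alpha\sim \alpha$ diverges whereas the integral contribution stays bounded. Exactly as for the analogous Lemma \ref{lemma:ESsurcritique}, I do not expect a closed-form proof of the remaining — and crucial — fact that $\alpha\mapsto M_\alpha$ is \emph{strictly increasing} from $M_0$ to $+\infty$; this monotonicity is the main obstacle, and I would establish it numerically (a Taylor expansion at $\alpha=0$ already confirms $M_\alpha>M_0$ for small $\alpha>0$, which is consistent). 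Granting it, for $M\le M_0$ the equation $M_\alpha = M$ has no solution with $\alpha\neq 0$, so $G_M$ is the only stationary state, whereas for $M>M_0$ it has exactly one solution $\alpha>0$, which yields the two additional asymmetric states $G_{\pm\alpha}$.
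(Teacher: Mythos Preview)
Your proposal is correct and follows essentially the same route as the paper: reduce to the scalar equation $M_\alpha=M$, note the symmetry $M_{-\alpha}=M_\alpha$, compute the limits $\lim_{\alpha\to0^+}M_\alpha=M_0$ and $M_\alpha\sim\alpha$ at infinity, and then rely on numerics for the remaining shape of $\alpha\mapsto M_\alpha$ (the paper invokes the numerical plot in Figure~\ref{simu:Alpha_M}~b) to conclude $I=(M_0,+\infty)$). Your write-up is in fact more explicit than the paper's, which compresses the derivation of the two forms $G_M$, $G_\alpha$ into ``a simple computation''.
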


\begin{proof}
A simple computation yields that any stationary solution is either of the form $G_M$ or of the form $G_\alpha$. It is straightforward that for all $M>0$, $G_M$ is solution and satisfies the mass constraint.
 For $G_\alpha$, we need to characterize the set $I = \{ M_\alpha,\, \alpha >0\}$. It can be proved that for all $\alpha >0$, $M_\alpha=M_{-\alpha}$, and $M_\alpha \sim_{+\infty} \alpha$, and that $\lim_{\alpha \rightarrow 0} M_\alpha = M_0$. As before, we performed a numerical simulation of $M_\alpha$ as a function of $\alpha$ (see figure \ref{simu:Alpha_M}, b)), leading to $I=(M_0,+\infty)$, hence the result.
\end{proof}

\begin{figure}\label{simu:Alpha_M}
\centering
\includegraphics[scale=0.3,trim = 0mm 130mm 0mm 0mm]{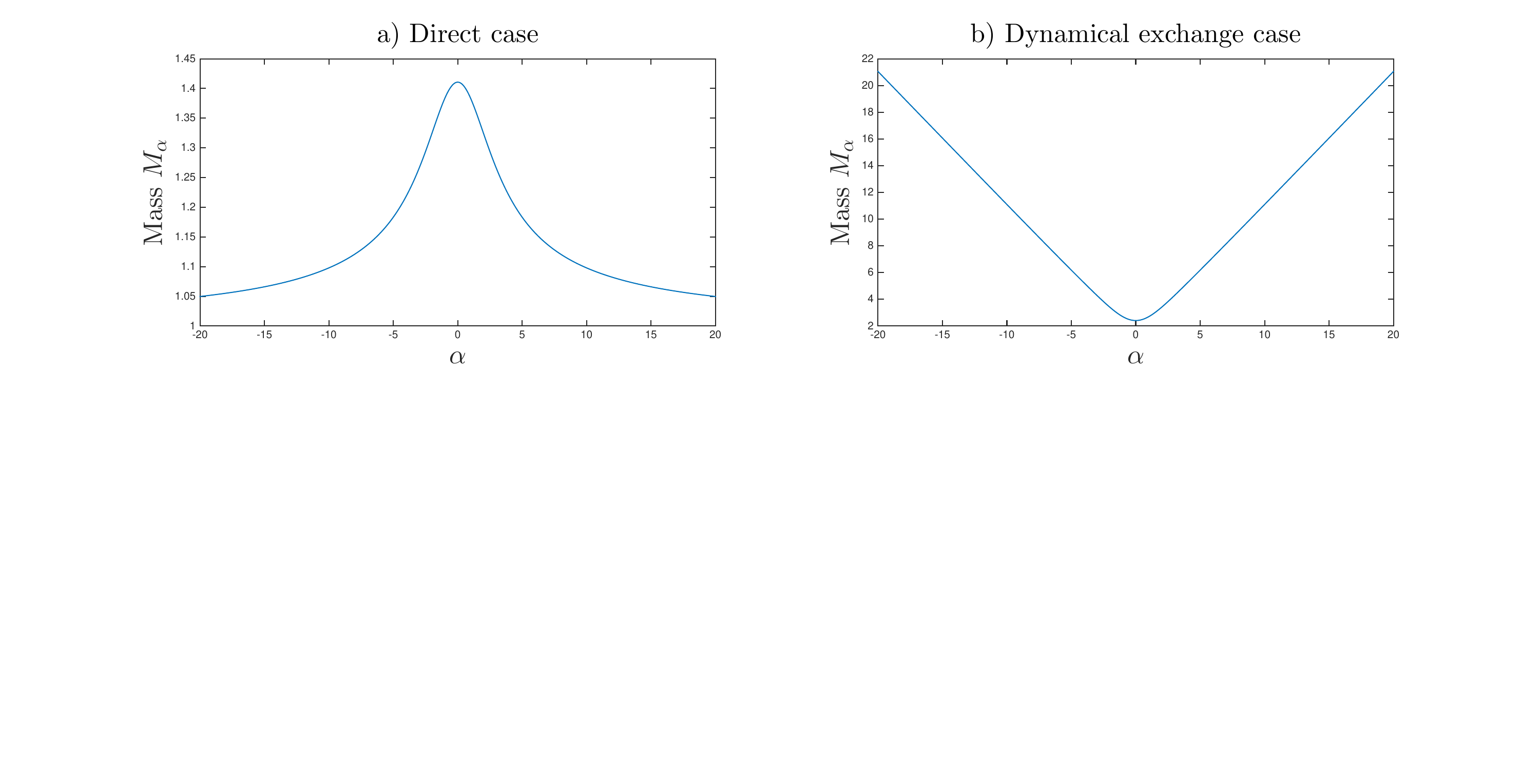}
\caption{Mass of $G_\alpha$ as a function of $\alpha$ for a) the direct case ; b) the dynamical exchange case.}
\end{figure}

Numerical simulations for this model are presented in figure \ref{simus:cas_attach}.

\begin{figure}\label{simus:cas_attach}
\centering
\includegraphics[scale=0.3]{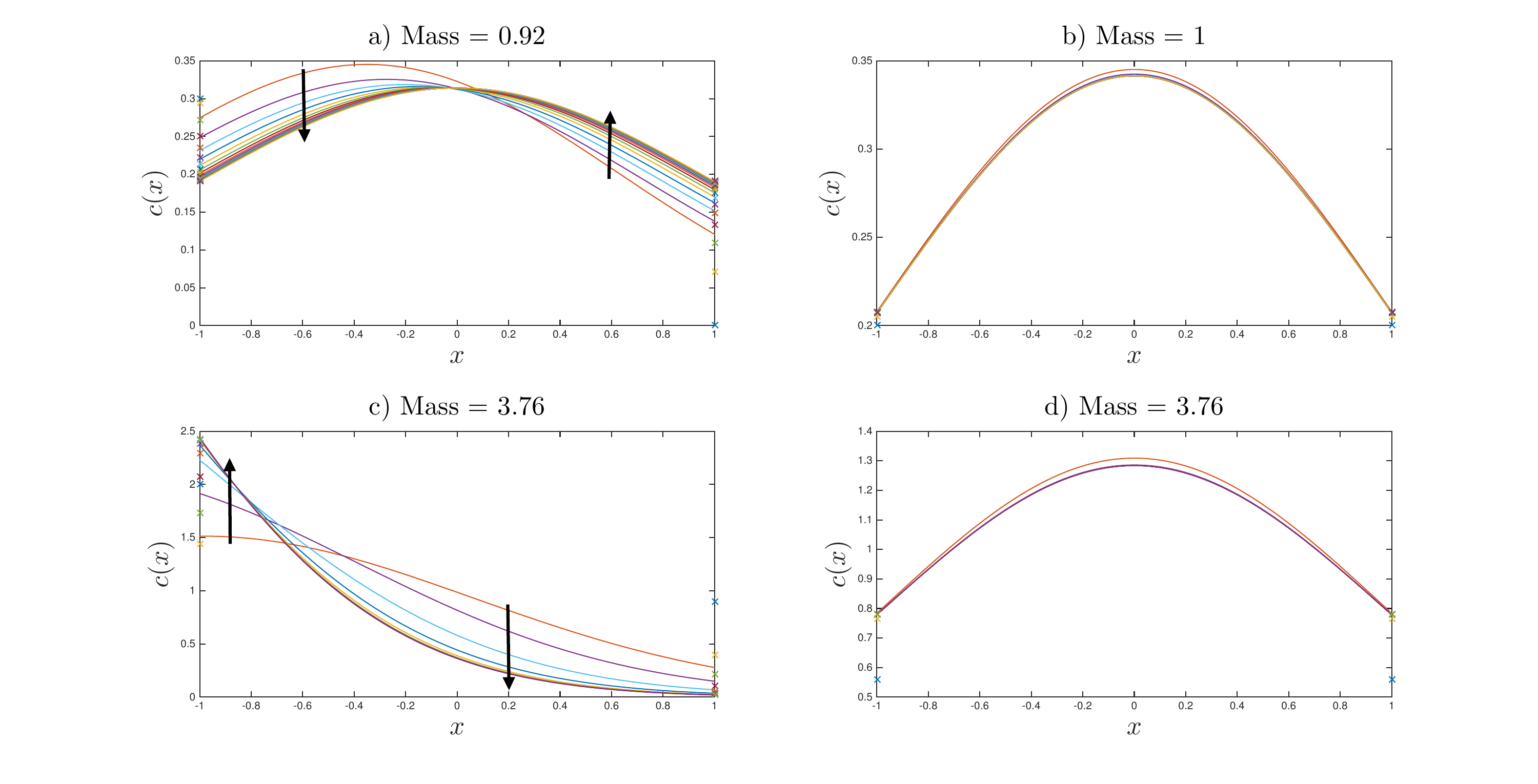}
\caption{Numerical simulations of the spatial concentration profile $c(x)$ of the marker, for the dynamical exchange model. Each plot corresponds to a different initial profile and mass :  a) sub-critical case ; b) critical case ; c), d) super-critical case. Each curve represents the concentration profile at a specific time. Parameters: $T=20$ ; $dt=10^{-2}$ ; $dx = 2* 10^{-3}$.}
\end{figure}

\section{Perspectives}
In this work, we presented a model of 1D cell migration based on the diffusion and advection of a molecular marker inside the cell, that itself exerts a feedback on this dynamics. The marker's spatial repartition is supposed to be characteristic of the polarisation state of the cell. 
The resulting equation, a non linear and non local Fokker-Planck equation, was shown to admit a dichotomy behaviour at equilibrium. Below and at the critical mass, polarisation is not possible. Between the critical mass and a limiting mass $M_0$, the system admits two polarised equilibria. Moreover, we have proved that for initial concentrations "steep enough", a blow up occurs in finite time, corresponding to a polarised state, but mainly showing that the model lacks regularity and that there is room for improvement. 
We next presented a more realistic model, where attachment-detachment kinetics of the marker at the membrane is taken into account. In this case, we were able to prove again a dichotomy behaviour for the existence of asymmetric steady states, and that the additional dynamics at the boundary is enough to prevent the blow up appearance. \par
 
A few questions remain today unresolved, and will be the object of a future work. The stability of either symmetric or asymmetric stationary states ought to be discussed, as prospective numerical simulations suggest non trivial results. Also, in the super critical case, it is of interest to search for a quantitative criterion to discriminate between the three stationary solutions. Finally, the controllability of the cell velocity can be studied. 

This model was designed to investigate the initiation of polarisation during motion. In order to describe a whole displacement, a stochastic instability could be included in the marker's dynamics. Then, the dichotomy's result could be investigated at long time scales. The question of an external signal perturbing the molecular dynamics is of similar nature. 

Moreover, a natural continuation consists in studying the corresponding 2D model, and the equivalent problem on a free boundary domain, in order to describe geometrical feedbacks of the cell on its motion. 


\medskip

\noindent{\em Acknowledgement: the authors are very grateful to V. Calvez, B. Maury, A. Mogilner and M. Piel 
for very helpful discussions. }

\bibliographystyle{plain}

\begin{thebibliography}{}

\end{thebibliography}


\begin{thebibliography}{10}


\bibitem{balance_mogilner}
{\sc E. Barnhart, K.-C. Lee, G. M. Allen, J. A. Theriot, and A. Mogilner}, {\em Balance between cell-substrate adhesion and myosin contraction determines the frequency of motility initiation in fish keratocytes}, Proc Natl Acad Sci U S A, 112(16):5045–50, Apr 2015.

\bibitem{Blanch}
{\sc C. Blanch-Mercader, J., Casademunt}, {\em Spontaneous motility of actin lamellar fragments}, Phys. Rev. Lett. (2013), 110, 078102,

\bibitem{BDP}
{\sc A.~Blanchet, J.~Dolbeault and B.~Perthame},
{\em Two-dimensional {K}eller-{S}egel model: optimal critical mass
and qualitative properties of the solutions},
Electron. J. Differential  Equations, (2006), pp.~No. 44, 32 pp. (electronic).


\bibitem{metastasis}
{\sc J. J. Bravo-Cordero, et al.}, {\em Directed cell invasion and migration during metastasis}, Curr Opin Cell Biol, 24(2):277–83, Apr 2012.

\bibitem{Callan-Jones_Joanny_Prost}
{\sc A.~Callan-Jones, J.-F. Joanny and J.~Prost},
{\em Viscous-Fingering-Like Instability of Cell Fragments},
Phys. Rev. Lett., 100(25), (2008)


\bibitem{Callan-Jones_Voituriez}
{\sc A.~Callan-Jones and R.~Voituriez},
{\em Active gel model of amoeboid cell motility},
New Journal of Physics, 15 (2013).


\bibitem{CalvezMeunier}
{\sc V.~Calvez, N.~Meunier and R.~Voituriez},
{\em A one-dimensional {K}eller-{S}egel equation with a drift issued from the boundary Boundary}, C. R. Math. Acad. Sci. Paris, 348 (2010), pp.~629--634.

\bibitem{CalvezMeunier_Siam}
{\sc V.~Calvez, R.~Hawkins N.~Meunier and R.~Voituriez},
{\em Analysis of a non local model for spontaneous cell polarisation}, SIAM J. Appl. Math., (2012), 72 (2), pp. 594--622.


\bibitem {Csiszar} 
{\sc I. Csisz\'ar},
{\em Information-type measures of difference of probability distributions and indirect observations},  Studia Sci. Math. Hungar.,  2  (1967), pp.~299--318.


\bibitem{arpin}
{\sc I. Dang, et al.}, {\em  Inhibitory signalling to the Arp2/3 complex steers cell migration}, Nature 503(7475):281-4, Nov 2013.

\bibitem{Insight_experiments}
{\sc G. Danuser, J. Allard, and A. Mogilner.}, {\em Mathematical modeling of eukaryotic cell migration: insights beyond experiments}, Annu Rev Cell Dev Biol, 29:501-28, 2013.

\bibitem{integrated1D}
{\sc A. T. Dawes and L. Edelstein-Keshet}, {\em  Phosphoinositides and Rho proteins spatially regulate actin polymerization to initiate and maintain directed movement in a one-dimensional model of a motile cell}, Biophysical Journal 92(3):744-468, 2007.

\bibitem{Evans}
{\sc L.~C. Evans}, {\em Partial differential equations}, vol.~19 of Graduate
  Studies in Mathematics, American Mathematical Society, Providence, RI, 1998.

\bibitem{polarity_mechanochemical}
{\sc N. W. Goehring and S. W. Grill}, {\em Cell polarity: mechanochemical patterning}, Trends Cell Biol, 23(2):72– 80, Feb 2013.

\bibitem{gross}
{\sc L.~Gross}, {\em Logarithmic {S}obolev inequalities}, Amer. J. Math., 97
  (1975), pp.~1061--1083.

\bibitem{HBPV}
{\sc R. J. Hawkins, O. B\'enichou, M. Piel and R. Voituriez}, {\em Rebuilding cytoskeleton roads: active transport induced polarisation of cells}, Phys. Rev. E 80, 040903 (2009).

\bibitem{computational_shape}
{\sc W. R. Holmes and L. Edelstein-Keshet}, {\em  A comparison of computational models for eukaryotic cell shape and motility}, PLoS Comput Biol, 8(12):e1002793, 2012.

\bibitem{Huber_EMT}
{\sc M. A. Huber, et al.}, {\em Molecular requirements for epithelial-mesenchymal transition during tumor progression}, Current Opinion in Cell Biology, 17(5):548 – 558, 2005.

\bibitem{JL}
{\sc W.~J{\"a}ger and S.~Luckhaus}, {\em On explosions of solutions to a system of partial differential equations modelling chemotaxis}, Trans. Amer. Math.
  Soc., 329 (1992), pp.~819--824.
  
 
 \bibitem{Joanny_Prost_2009}
{\sc J.F. Joanny, J. Prost}, {\em Active gels as a description of the actin-myosin cytoskeleton}. HFSP J, 3(2):94–104, 2009.
 
\bibitem{Jul_Kru_Prost_Joanny}
{\sc J.F. Joanny, F.  J{\"u}licher, K. Kruse, J. Prost}, {\em Hydrodynamic theory for multi-component active polar gels}. New J. Phys. (2007) 9

\bibitem{Jul_Kru_Prost_Joanny_2}
{\sc F. J{\"u}licher, K. Kruse, J. Prost, J.F. Joanny}, {\em Active behavior of the cytoskeleton}. Phys. Rep.-Rev. Sect. Phys. Lett. (2007) 449, 3–28.

\bibitem{Keller_Development}
{\sc R. Keller}, {\em Shaping the vertebrate body plan by polarized embryonic cell movements}, Science, 298(5600):1950–1954, 2002.

  
\bibitem{Mogilner_Nature_2008}
{\sc K. Keren, Z. Pincus, G. M. Allen, E. L. Barnhart, G. Marriott, A. Mogilner, and J.A. Theriot}, {\em Mechanism of shape determination in motile cells}, Nature, 53, 475 (2008).


\bibitem{Gautreau_review}
{\sc M. Krause, A. Gautreau}, {\em Steering cell migration: lamellipodium dynamics and the regulation of directional persistence}. Nat Rev Mol Cell Biol, 15(9):577–90, Sep 2014.

\bibitem{Hall1998}
{\sc A. Hall}, {\em Rho GTPases and the actin cytoskeleton}, Science 279(5350):509-514, 1998.

\bibitem{Kru_Joanny_Jul_Prost_contractility}
{\sc K. Kruse, J.F. Joanny, F. J{\"u}licher, J. Prost}, {\em Contractility and retrograde flow in lamellipodium motion}. Phys Biol, 3(2):130–7, Jun 2006.

\bibitem{Lepoutre_Meunier_Muller_JMPA}
T.~{\sc Lepoutre}, N.~{\sc Meunier} et N.~{\sc Muller},
{\em Cell polarisation model: the 1D case},  J. Math. Pures Appl.

\bibitem{polarization_mogilner}
{\sc A. J. Lomakin, K.-C. Lee, S. J. Han, D. A. Bui, M. Davidson, A. Mogilner, and G. Danuser}, {\em Competition for actin between two distinct f-actin networks defines a bistable switch for cell polarization}, Nat Cell Biol, 17(11):1435–45, Nov 2015.

\bibitem{Machacek_Rho_2009}
{\sc M. Machacek, L. Hodgson, C. Welch, H. Elliott, O. Pertz, P. Nalbant, A. Abell, G. L. Johnson, K. Hahn, G. Danuser, Gaudenz}, {\em Coordination of Rho GTPase activities during cell protrusion}, Nature, 461(7260):99– 103, Sep 2009.

\bibitem{Maiuri}
{\sc P. Maiuri, et al.}, {\em Actin flows mediate a universal coupling between cell speed and cell persistence.} Cell 161, 374– 386 (2015).

\bibitem{integrated2D}
{\sc A. F. M. Mar\'{e}e, A. Jilkine, A. Dawes, V. A. Grieneisen, and L. Edelstein-Keshet}, {\em Polarization and movement of keratocytes: a multiscale modelling approach}, Bull Math Biol 68(5):1169-211, Jul 2006.

\bibitem{motility_number}
{\sc A. Mogilner}, {\em  Mathematics of cell motility: have we got its number?}, J Math Biol 58(1-2):105-34, Jan 2009.

\bibitem{Muller_Plos}
{\sc N. Muller, et al.}, {\em A Predictive Model for Yeast Cell Polarization in Pheromone Gradients}, PLoS Comput Biol, 12(4), 2016.

\bibitem{Nagai}
{\sc T. Nagai}, {\em Blow-up of radially symmetric solutions to a chemotaxis system}, Adv. Math. Sci., 5 (1995), pp.~581--601.

\bibitem{P}
{\sc B. Perthame}, {\em Blow-up of radially symmetric solutions to a chemotaxis system}, Adv. Math. Sci., 5 (1995), pp.~581--601.

\bibitem{recho}
{\sc P. Recho, T. Putelat, and L. Truskinovsky}, {\em Mechanics of motility initiation and motility arrest in crawling cells}, Journal of the Mechanics and Physics of Solids, 84:469 – 505, 2015.


\bibitem{Ridley_LeadingEdge}
{\sc A. J. Ridley}, {\em Life at the leading edge}, Cell 145(7):1012–22, Jun 2011.

\bibitem{Ridley_SignalFrontBack}
{\sc A. J. Ridley, et al.}, {\em Cell migration: integrating signals from front to back}, Science 302(5651):1704–9, Dec 2003.


\bibitem{multiscale_2D_keratocyte}
{\sc B. Rubinstein, K. Jacobson, and A. Mogilner}, {\em  Multiscale two-dimensional modeling of a motile simple-shaped cell}, Multiscale Model Simul 3(2):413-439, 2005.


\bibitem{redundant_mogilner}
{\sc C. W. Wolgemuth, J. Stajic, and A. Mogilner}, {\em  Redundant mechanisms for stable cell locomotion revealed by minimal models}, Biophys J, 101(3):545–53, Aug 2011.


\end{thebibliography}
\def\cprime{$'$} \def\lfhook#1{\setbox0=\hbox{#1}{\ooalign{\hidewidth
  \lower1.5ex\hbox{'}\hidewidth\crcr\unhbox0}}} \def\cprime{$'$}
  \def\cprime{$'$} \def\cprime{$'$} \def\cprime{$'$} \def\cprime{$'$}

\end{document}